\theoremstyle{plain}
\newtheorem{lemma}{Lemma}[section]
\newtheorem{coro*}{Corollary}[section]
\newtheorem{observation}{Observation}[section]
\newtheorem{theorem}{Theorem}[section]
\theoremstyle{definition}
\newtheorem{defn}{Definition}[section]
\newtheorem*{res*}{Theorem}
\numberwithin{equation}{section}
\begin{document}
\title{An Edge Labeling of Graphs from Rado's Partition Regularity Condition}

\author {Arun J Manattu\footnote{E-mail: arunjmanattu@gmail.com} ~ and Aparna Lakshmanan S\footnote{E-mail: aparnals@cusat.ac.in, aparnaren@gmail.com}\\
Department of Mathematics\\	Cochin University of Science and Technology\\Cochin -
	22}
\maketitle
\begin{abstract}
A vertex $v$ is called an AR-vertex, if $v$ has distinct edge weight sums for each distinct subset of edges incident on $v$. i.e., if $\{x_1,x_2,\dots,x_k\}$ are the edge labels of the edges incident on $v$, then the $2^k$ subset sums are all distinct. An injective edge labeling $f$ of a graph $G$ is said to be an AR-labeling of $G$, if $f:E \rightarrow \mathbb{N}$ is such that every vertex in $G$ is an AR-vertex under $f$. A graph $G$ is said to be an AR-graph, if there exists an AR-labeling $f:E\rightarrow \{1,2,\dots,m\}$, where $m$ denotes the number of edges of $G$. A study of AR-labeling and AR-graphs is initiated in this paper.\\
\noindent \line(1,0){395}\\
\noindent {\bf Keywords:} Edge Labeling, Rado's Theorem, AR-labeling, AR-graph

\noindent {\bf AMS Subject Classification:} Primary: 05C78, Secondary: 05C55 \\
\noindent \line(1,0){395}\\
\end{abstract}

\section{Introduction}
The Ramsey theory is a branch of combinatorics exploring the presence of order in every given structure man constructs. Philosophically speaking, the Ramsey Theory states that there cannot be absolute chaos in any system. A precursor to this idea can be found in Schur’s Theorem \cite{Rob} by Issai Schur which states that, if the set of positive integers $\mathbb{N}$ is finitely colored then there exists $x,y,z$ having the same color such that $x+y=z.$\par
Richard Rado, a student of Schur, proved that the equation $c_{1}x_{1}+c_{2}x_{2}+\dots +c_{n}x_{n}=0$ is partition regular (i.e., it has a monochromatic solution whenever $\mathbb{N}$ is finitely colored) if and only if there is $J\subseteq \{1,2,3,\dots,n\}$ such that $\sum_{i \in J} c_{i}=0$. Schur’s Theorem could be identified as a particular case of Rado’s theorem with coefficients of $x_{1}, x_{2}$ and $x_{3}$ being 1, 1 and -1, respectively. Rado’s theorem is considered a part of the Ramsey theory, since it assures that, given any linear equation that satisfies the regularity condition, in any $r$-coloring of sufficiently large natural numbers, there will be a monochromatic solution. So, the regularity condition of Rado enforced on the linear equation is powerful and cannot be compromised.\par
Motivated by the strength of this condition, if we introduce a similar line of restriction on the vertices in an edge labelled graph, soon we realize that the number-theoretic notion of regularity slowly disappears while using sufficiently large natural numbers. In other words, given any graph, we can label its edges with natural numbers (sufficiently large) so that the number-theoretic notion of regularity is absent from every vertex of the graph. In fact, given a graph $G$, if we restrict our labeling to a one-one function from the edge set $E(G)$ of the graph $G$ to $\mathbb{N}$, vertices with degree at most 2 will never be number-theoretically regular. So we asked the question, can we label the edges of a graph in such a way that the set of edge weights of a given vertex has distinct subset sums? In other words, in terms of Rado's partition regularity, can we label edges so that among the weights of edges incident on a vertex, there exists no linear combination of them (with coefficients $\pm 1$) summing to 0? In this paper, we formulate this problem as an edge labeling problem in graphs which we termed as AR-labeling\footnote{The labeling is named AR since the letters A and R are the common letters in the names of the authors and Rado, whose partition regularity condition was the motivation behind the idea of this labeling.}.\par
By a graph $G=(V,E)$ we mean a finite simple undirected graph. The order $\left|V\right|$ and
the size $\left|E\right|$ of $G$ are denoted by $n$ and $m$, respectively.
For all graph theoretic terminology and notations not mentioned here, we refer to Balakrishnan and Ranganathan  \cite{1}.
\subsection{New Definitions and Terminology}
\begin{defn}
    Let $f: E \rightarrow \mathbb{N}$ be an injective edge labeling of a graph $G$. A vertex $v$ is called an AR-vertex, if $v$ has distinct edge weight sums for each distinct subset of edges incident on $v$. i.e., if $\{x_1,x_2,\dots,x_k\}$ are the edge labels of the edges incident on $v$, then the $2^k$ subset sums are all distinct.
\end{defn}
\begin{defn}
    An injective edge labeling $f$ of a graph $G$ is said to be an AR-labeling of $G$, if $f:E \rightarrow \mathbb{N}$ is such that every vertex in $G$ is an AR-vertex under $f$.
\end{defn}

\begin{defn}
    A graph $G$ is said to be an AR-graph, if there exists an AR-labeling $f:E \rightarrow \{1,2,\dots,m\}$, where $m$ denotes the number of edges of $G$.
\end{defn}
 
Figure \ref{Figure 2} illustrates AR-labeling of the Petersen graph.
\begin{figure}[ht]\center
  \includegraphics[scale=0.45]{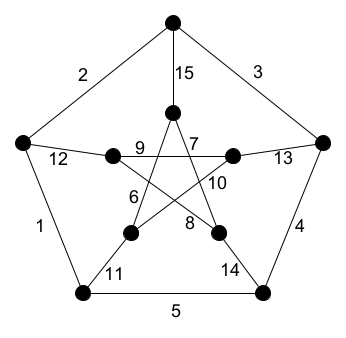} 
  \caption{AR-labeling of Petersen graph}\label{Figure 2}
\end{figure}

AR-labeling of a graph $G$ makes each subset of edges incident on every vertex $v$ unique to the point of identifying them using an ordered pair $(v, k)$, where $k \in \mathbb{N}$. If there is a connected graph representing a communication network with each vertex having only local knowledge, AR-labeling will provide the (server) initiator of communications with distinct commands for each communication that could be translated only by the vertex receiving the command. If labels are determined by the server, the command, even if hacked, cannot be decrypted as long as the individual vertices are not compromised. So, AR-labeling could be used for security networks and defense systems.\par
In an information-theoretic interpretation  \cite{2}, namely in the setting of signaling over a multiple access channel, we can interpret the integers as pulse amplitudes that $n$ transmitters can transmit over an additive channel to send one bit of information each, for example, to signal to the base station that they want to start a communication session. The requirement that all subset sums be distinct expresses the desire that the base station be able to infer any possible subset of active users.

\section{Basic Results on AR-vertices}

In this section, we prove a set of lemmas regarding AR-vertices which will be used to prove theorems in the later sections. We believe that these lemmas will be extremly useful for those who wish to work in this concept.
\begin{lemma}\label{1or2}
    The vertices of $G$ with degree less than or equal to 2 are AR-vertices in every injective edge labeling of $G$.  
\end{lemma}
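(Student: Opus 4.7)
The plan is to handle each of the three possible degrees ($0$, $1$, $2$) separately and verify that the required $2^k$ subset sums are pairwise distinct in each case. Since $k$ is small, the entire argument reduces to listing the subsets and comparing their sums directly, using only two facts about the labeling: that $f$ takes values in $\mathbb{N}$ (so every label is positive), and that $f$ is injective (so labels on distinct edges are distinct).

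First I would dispose of the trivial cases. If $\deg(v)=0$, then the only subset of edges incident on $v$ is the empty set, so there is a single sum ($0$) to consider and it is vacuously distinct from anything else. If $\deg(v)=1$ with incident edge labeled $x_1$, the two subsets $\emptyset$ and $\{x_1\}$ have sums $0$ and $x_1$, and these differ because $x_1 \geq 1$.

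The only case requiring any comparison is $\deg(v)=2$, where the incident edges carry labels $x_1, x_2$, with $x_1 \ne x_2$ by injectivity. The four subset sums are
\[
0,\quad x_1,\quad x_2,\quad x_1+x_2.
\]
I would then check the six pairwise inequalities: $0 \ne x_1$ and $0 \ne x_2$ because labels are positive; $x_1 \ne x_2$ by injectivity; $0 \ne x_1+x_2$ because the sum of two positive integers is positive; $x_1 \ne x_1+x_2$ and $x_2 \ne x_1+x_2$ because $x_2,x_1 > 0$. Hence all four sums are distinct and $v$ is an AR-vertex.

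There is essentially no obstacle here, as the statement is a direct consequence of the positivity and injectivity of the labeling once one enumerates the (at most four) subsets. The purpose of stating and recording this lemma is to justify that in subsequent arguments one only has to verify the AR-condition at vertices of degree at least $3$.
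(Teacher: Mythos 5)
Your proposal is correct and follows essentially the same argument as the paper: a direct case check by degree, using positivity of the labels to separate the pairwise sum from the singletons and injectivity to separate the two singletons. Your version is only slightly more exhaustive (explicitly including the degree-$0$ case and the empty subset), which changes nothing of substance.
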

\begin{proof}
    If $v$ is a vertex with degree one, there is only one edge incident on $v$ and hence only one number appears as an edge weight sum.\\
    If $v$ is a vertex of degree 2. Consider an injective edge labeling of $G$.  Suppose $v$ is adjacent to two edges say $e_{1}$ and $e_{2}$. Let $e_{1}$ and $e_{2}$ be labelled $x$ and $y$ respectively. Due to the injectivity of the labeling, $x\neq y$, and since the labels are from positive integers, $x+y$ cannot be equal to either $x$ or $y$. Hence in both scenarios, $v$ is an AR-vertex. 
\end{proof}

\begin{lemma}\label{3odd}
    A vertex of degree 3 could be made an AR-vertex by labeling the edges incident on $v$ using distinct odd numbers. 
\end{lemma}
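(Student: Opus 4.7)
My plan is to use a parity argument. Suppose the three edges incident on $v$ are labeled with distinct odd positive integers $a$, $b$, $c$. There are $2^3 = 8$ subset sums, namely
\[
0,\; a,\; b,\; c,\; a+b,\; a+c,\; b+c,\; a+b+c.
\]
I would first observe that the parity of each subset sum is determined by the cardinality of the subset, since the sum of $k$ odd numbers has the same parity as $k$. Hence the four even sums come from subsets of even size, namely $0$, $a+b$, $a+c$, $b+c$, while the four odd sums come from subsets of odd size, namely $a$, $b$, $c$, $a+b+c$. Since evens and odds cannot coincide, it remains to verify distinctness within each of these two groups of four.

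For the even group, $0$ is clearly different from the three strictly positive sums $a+b$, $a+c$, $b+c$, and these three are pairwise distinct because $a$, $b$, $c$ are pairwise distinct (if $a+b = a+c$, then $b=c$). For the odd group, $a$, $b$, $c$ are pairwise distinct by hypothesis, and $a+b+c$ is strictly greater than each of $a$, $b$, $c$ since $a,b,c$ are positive. Thus all eight subset sums are distinct, and $v$ is an AR-vertex.

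I do not foresee any real obstacle: the proof is essentially a split by parity of the subset size. The only thing to be careful about is that the proof only shows that the three edges \emph{incident on $v$} can be chosen with odd labels to make $v$ an AR-vertex; it does not address consistency with labels on other edges or other vertices of $G$, but the statement of the lemma concerns only the single vertex $v$.
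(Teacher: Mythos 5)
Your proof is correct and follows essentially the same route as the paper's: the key point in both is that pairwise sums of odd labels are even (so they cannot collide with the odd singleton sums), with distinctness and positivity handling the remaining comparisons. Your version is simply a more explicit and careful write-up of the same parity argument.
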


\begin{proof}
    Let $x, y$ and $z$ be three distinct odd numbers. The sum of every pair will be an even number, which would not be equal to the third number. And the sum of all three would be greater than each pair-wise sum.
\end{proof}
\begin{lemma}\label{3lab}
    Given a vertex $v$ in G, if any two edges incident on $v$ are labelled $x$ and $y$, then a third edge can be $z$ if and only if $x + y \neq z$ and $|x - y| \neq z$. Moreover, if the edges incident on a vertex $v$ of degree three are labelled $x$, $y$ and $z$ with $x<y<z$, then $v$ is an AR-vertex if $x + y \neq z$.

\end{lemma}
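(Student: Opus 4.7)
The plan is to enumerate all $2^3=8$ subset sums arising from the three edge labels $x$, $y$, $z$ incident at $v$, and identify exactly which pairs among them can possibly coincide. First I would list the sums: $0$, $x$, $y$, $z$, $x+y$, $x+z$, $y+z$, $x+y+z$. The empty sum $0$ differs from every other entry because each remaining sum is a strictly positive integer, and $x+y+z$ strictly exceeds every two-element sum because labels are positive, so neither $0$ nor $x+y+z$ can collide with anything. Distinctness among the singletons $x$, $y$, $z$ is delivered by the injectivity hypothesis on the labeling.

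Next I would eliminate the remaining routine comparisons. Two distinct pair-sums cannot agree without forcing two of the labels to be equal (for instance $x+y=x+z$ yields $y=z$), and a pair-sum cannot equal one of its own summands, since the omitted label would have to be zero. This leaves exactly three potential collisions, each equating a pair-sum to the singleton that does not appear in it: $x+y=z$, $x+z=y$, and $y+z=x$. The last two rearrange to $z=y-x$ and $z=x-y$ respectively; since $z$ is a positive integer, at most one of these can hold, and together they are equivalent to the single condition $z=|x-y|$. Combining with the first, $v$ fails to be an AR-vertex precisely when $z=x+y$ or $z=|x-y|$, which yields the claimed biconditional.

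For the ``moreover'' statement, I would observe that under the ordering $x<y<z$ one has $|x-y|=y-x<y<z$, so the condition $z\neq|x-y|$ is automatically satisfied and only the single hypothesis $x+y\neq z$ needs to be verified. The entire argument is pure case analysis, so there is no substantial obstacle; the only place requiring genuine care is consolidating the two subtractive equations into the absolute-value form by invoking the positivity of $z$ to decide which of $y-x$ or $x-y$ is the operative value.
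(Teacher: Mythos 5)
Your proof is correct and follows the same elementary route the paper intends: a direct comparison of the eight subset sums, reducing the possible collisions to $z=x+y$ or $z=|x-y|$, with the ordering $x<y<z$ killing the subtractive case for the ``moreover'' part. In fact, your write-up is more complete than the paper's one-line justification (which only notes that pair-sums involving $z$ exceed the singletons), so no gaps to report.
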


\begin{proof}
    The pair-wise sums including $z$ will always be greater than singletons.
\end{proof}
\begin{lemma}\label{ap}
    If the edges incident on a vertex $v$ with degree three are labelled with numbers in arithmetic progression having common difference $d$, then $v$ is an AR-vertex if the least element $x$ among the edge labels is not $d$, that is $x \neq d$.
\end{lemma}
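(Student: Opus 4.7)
The plan is to reduce this immediately to Lemma \ref{3lab}, which already characterizes when a degree-3 vertex with labels $x<y<z$ becomes an AR-vertex (namely, exactly when $x+y\neq z$). Since the three labels form an arithmetic progression with common difference $d$, and the labels are distinct natural numbers, we may assume $d>0$ and write them as $x$, $x+d$, $x+2d$ with $x<x+d<x+2d$.

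Next I would simply compute the condition $x+y\neq z$ from Lemma \ref{3lab} with $y=x+d$ and $z=x+2d$. This gives $x+(x+d)\neq x+2d$, i.e., $2x+d\neq x+2d$, which simplifies to $x\neq d$. Since this is precisely the hypothesis of the lemma, the conclusion follows.

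There is essentially no obstacle here: the lemma is a direct arithmetic specialization of Lemma \ref{3lab}. The only minor point worth mentioning for completeness is the convention that $d$ denotes the common difference of the strictly increasing ordering of the labels (equivalently, $d=|d|>0$); once this is fixed, the argument is a one-line calculation.
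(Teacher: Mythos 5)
Your proposal is correct and matches the paper's proof exactly: both apply Lemma \ref{3lab} to the labels $x$, $x+d$, $x+2d$ and observe that the condition $x+y\neq z$ becomes $2x+d\neq x+2d$, i.e., $x\neq d$.
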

      
\begin{proof}
    By $Lemma~3$, $v$ is an AR-vertex if and only if $2x+d \neq x+2d$.
\end{proof}
\begin{lemma}\label{4lab}
    Given 8 vertices labelled with some numbers with each number appearing at most
twice, and given 4 distinct numbers $x_1$, $x_2$, $x_3$ and $x_4$, we can always label any set of 4 independent edges between these vertices, using $x_1$, $x_2$, $x_3$ and $x_4$, with the edge labels being distinct from the label of the vertices it is incident on.
\end{lemma}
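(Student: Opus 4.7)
The plan is to model this as a bipartite matching problem and verify Hall's condition. Let $e_1,e_2,e_3,e_4$ be the four independent edges, and form the bipartite graph $H$ whose two parts are $\{e_1,e_2,e_3,e_4\}$ and $\{x_1,x_2,x_3,x_4\}$, with an edge joining $e_i$ to $x_j$ precisely when $x_j$ differs from both vertex labels on $e_i$ (call such $x_j$ \emph{allowed} for $e_i$). A perfect matching in $H$ is exactly an assignment of the four numbers to the four edges of the desired kind, so I would prove that $H$ admits a perfect matching by Hall's theorem.

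Next, I would record two structural bounds that drive the whole argument. First, each edge $e_i$ has at most two forbidden labels, namely the labels of its two endpoints, so $\deg_H(e_i)\ge 2$. Second, because each number from $\{x_1,\dots,x_4\}$ appears on at most two of the eight vertices, and because the four edges are pairwise vertex-disjoint, each $x_j$ can be forbidden for at most two of the $e_i$'s; in particular no single label is forbidden for three or more edges.

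With these two bounds in hand, I would check Hall's condition $|N_H(S)|\ge |S|$ for every $S\subseteq\{e_1,\dots,e_4\}$. The cases $|S|=1$ and $|S|=4$ follow immediately from the two bounds above. For $|S|=3$, the observation that no label is forbidden for three edges means every label is allowed by some edge in $S$, giving $|N_H(S)|=4$. The delicate case, and the one I expect to need the most care, is $|S|=2$: here I need at most two labels to be simultaneously forbidden for both edges, which I would obtain by noting that a label forbidden for both $e_i$ and $e_j$ must occupy (exactly) one endpoint of each of $e_i$ and $e_j$, so at most two such labels can fit on the four endpoints of $e_i\cup e_j$.

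Once Hall's condition is verified in all four cases, the marriage theorem delivers a perfect matching in $H$, which is precisely the required labeling of $e_1,e_2,e_3,e_4$ by $x_1,x_2,x_3,x_4$. The only real obstacle is the $|S|=2$ case, and the crux there is exploiting independence of the edges together with the ``each number appears at most twice'' hypothesis to bound the number of commonly-forbidden labels by two.
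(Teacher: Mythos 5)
Your Hall's theorem argument is correct, and it takes a genuinely different route from the paper, which proves the lemma by an explicit case analysis: the paper renames the vertices so that $v_{2i-1}$ and $v_{2i}$ share the label $x_i$ (treating the ``worst case'' where the vertex labels coincide with the four given numbers) and then constructs the assignment by hand according to which of these pairs are joined by the given edges, remarking that other label patterns are handled by modifying the same construction. Your bipartite-matching formulation avoids that case work entirely and treats arbitrary vertex labels uniformly: the two degree bounds (each edge forbids at most its two endpoint labels; each number, appearing on at most two of the eight pairwise-distinct endpoints, is forbidden for at most two edges) give Hall's condition directly. In fact your check of $|S|=2$ can be shortened --- any single edge in $S$ already has at least two allowed labels, so $|N_H(S)|\ge 2$ without counting commonly forbidden labels --- and, more importantly, the same two bounds verify Hall's condition for any $k\ge 4$ independent edges and $k$ distinct numbers (for $|S|\le k-2$ use one edge's degree; for $|S|\ge 3$ every label is allowed by some edge of $S$), so your argument also yields Observation \ref{klab} in one stroke, without the paper's greedy reduction to the eight-vertex case. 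What the paper's approach buys in exchange is a completely elementary, explicitly constructive assignment that does not invoke the marriage theorem; what yours buys is brevity, uniformity over all label patterns, and immediate generalization.
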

\begin{proof}
    If the 4 numbers are distinct from the labels, the assignment is trivial. The most
difficult assignment arises when labels and the numbers given are the same. Even if
some labels are different, as long as there are four distinct numbers and each 
label appears at most twice, this proof could be modified and used for the labeling of four
edges between them with the desired property. Rename the vertices as $v_1, v_2, v_3, \dots, v_8$ in such a way that $v_{2i-1}$ and $v_{2i}$ have the same label $x_i$.\\
Consider $v_1$ and $v_2$, they could either be adjacent or not adjacent to each other.\\
{\bf Case 1:} $v_1$ and $v_2$ are adjacent to each other.
\begin{enumerate}
    \item[i] If $v_3$ and $v_4$ are adjacent to each other, label the edges $v_1 v_2$ as $x_3$ and $v_3 v_4$ as $x_4$.
The labeling satisfies our condition irrespective of how we label the remaining
edges. 
\item[ii] If $v_3$ and $v_4$ are adjacent to vertices with the same label, say $v_3$ is adjacent to $v_5$ and $v_4$ is adjacent to $v_6$, label one of them as $x_4$ and the other as $x_1$. Our requirement will be satisfied by the labeling.
\item[iii] If $v_3$ and $v_4$ are adjacent to vertices with different labels, say $v_3$ is adjacent to $v_5$ and $v_4$ is adjacent to $v_7$, label $v_3 v_5$ as $x_4$, $v_4 v_7$ as $x_3$, $v_1 v_2$ as $x_2$ and $v_6 v_8$ as $x_1$.
\end{enumerate}
{\bf Case 2:} $v_1$ and $v_2$ are not adjacent to each other.
\begin{enumerate}
    \item[i] If $v_1$ and $v_2$ have neighbours with the same label, say $v_3$ and $v_4$, label those edges as $x_3$ and $x_4$, then the labeling satisfies our condition.
    \item[ii] If $v_1$ and $v_2$ have neighbours with different labels, say  $v_3$ and $v_5$, label $v_1 v_3$ as $x_3$ and $v_2 v_5$ as $x_4$. Now among the remaining vertices, only one of them has label $x_2$ that is $v_4$. Label the edge incident on $v_4$ as $x_1$ and label the remaining edge as $x_2$. 
\end{enumerate}
So, by exhausting all possibilities, we have proved the lemma.
\end{proof}
Replacing a vertex here with a vertex having a different label that is not already
present does not affect the proof. Furthermore, if we add additional edge
labels with each pair of vertices, this construction could be extended provided at most
two vertices have the same label. For example, if there are 10 vertices and 5 edge
labels, then label any edge between them using a number that is not the label of
any of the vertices it is incident on. The remaining is a set of 8 vertices with at most
two vertices having a specific label and 4 distinct numbers.
\begin{observation} \label{klab}
    As a consequence of Lemma \ref{4lab}, if we have $k$  distinct numbers $(k\geq 4)$ and $2k$ labelled vertices with each
label appearing at most twice, we can always label any set of $k$  independent edges
between these vertices with the edge labels being distinct from the label of the
vertices it is incident on.
\end{observation}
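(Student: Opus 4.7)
The plan is to prove the observation by induction on $k$, using Lemma \ref{4lab} as the base case $k=4$. The inductive idea is essentially the one flagged by the authors in the remark following Lemma \ref{4lab}: peel off one edge at a time, each time reducing the number of edge labels and vertices by one, until we reach the configuration handled by Lemma \ref{4lab}.

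For the inductive step, assume the claim holds for $k-1 \geq 4$ and suppose we are given $2k$ vertices (each vertex label occurring at most twice), $k$ distinct numbers $x_1,\dots,x_k$, and a set $M$ of $k$ independent edges among the vertices. Pick any edge $e=uv \in M$. Together, $u$ and $v$ carry at most two distinct labels, so from the pool $\{x_1,\dots,x_k\}$ (of size $k \ge 5$) we may choose some $x_j$ that differs from both $\mathrm{label}(u)$ and $\mathrm{label}(v)$. Assign $f(e):=x_j$.

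Now delete the endpoints $u,v$ from the vertex set, remove $e$ from $M$, and remove $x_j$ from the pool of available numbers. What remains is a set of $2(k-1)$ labelled vertices in which every label still appears at most twice (removing vertices can only decrease multiplicities), a set of $k-1$ independent edges (independence is preserved under vertex deletion), and a collection of $k-1 \geq 4$ distinct numbers. The induction hypothesis then furnishes a valid labeling of the remaining edges, and combining it with $f(e)=x_j$ yields the desired labeling of the full matching $M$.

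The only step where one could imagine an obstacle is the selection of $x_j$, because we need to keep two things simultaneously true: $x_j$ must avoid both endpoint labels of $e$, and after removing $x_j$ we must be able to invoke the inductive hypothesis. The first is free because $k-2 \geq 3$, and the second is automatic because the hypothesis of the observation is preserved verbatim under the deletion of two vertices and one number. Hence the induction runs without further difficulty, giving the result for all $k \geq 4$.
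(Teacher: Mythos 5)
Your proposal is correct and is essentially the paper's own argument: the paper also peels edges off greedily (labelling $k-4$ of them one at a time, each time avoiding the at most two endpoint labels, which is possible since enough of the $k$ numbers remain) and then applies Lemma \ref{4lab} to the remaining four edges. Your induction on $k$ with base case $k=4$ is just that greedy reduction phrased recursively, so there is nothing substantively different to flag.
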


\begin{proof}
    Consider a set of $k$  independent edges and $2k$ labelled vertices with each label appearing at most twice. Start labeling the edges in some order making
sure that the edge label is distinct from the label of the vertices it is incident on.
Proceeding this way label $k - 4$ edges. To label the $i^{th}$ edge, there are $(k - i +1)$
labels. And since $i \leq k-4$, $ k-i \geq 4$. So, the labeling continues without barriers.
Once we label $k - 4$ edges, stop the procedure. What is left behind is 8 vertices with at most two each having same labels and four distinct numbers.
Lemma \ref{4lab} gaurentees the existence of a suitable edge labeling satisfying our condition.
\end{proof}

\begin{lemma}\label{kshift}
    Let $\{a_1, a_2, a_3, a_4\}$ be a set with distinct subset sums with $a_i < a_j$ for $i < j$, then for every $k \geq a_4, \{k+a_1, k+a_2, k+a_3, k+a_4\}$ is also having distinct subset sums. 
\end{lemma}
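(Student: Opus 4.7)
The plan is to identify every subset of the shifted set $\{k+a_1,k+a_2,k+a_3,k+a_4\}$ with a subset $J\subseteq\{1,2,3,4\}$ via the bijection $J\longmapsto \{k+a_j:j\in J\}$ (well defined because $a_1<a_2<a_3<a_4$ makes the shifted elements distinct). Under this identification, the sum of the subset indexed by $J$ is
\[
|J|\,k+\sum_{j\in J}a_j.
\]
Assume for contradiction that two distinct index sets $J_1\neq J_2$ give the same shifted sum. Then
\[
(|J_1|-|J_2|)\,k \;=\; \sum_{j\in J_2}a_j-\sum_{j\in J_1}a_j. \qquad (\star)
\]
The proof will split on whether $|J_1|=|J_2|$ or not.

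If $|J_1|=|J_2|$, then $(\star)$ forces $\sum_{j\in J_1}a_j=\sum_{j\in J_2}a_j$, contradicting the hypothesis that $\{a_1,a_2,a_3,a_4\}$ has distinct subset sums (since $J_1\neq J_2$). So I only need to rule out the unequal-cardinality case.

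Assume $|J_2|>|J_1|$ (relabel if needed) and set $A=J_1\setminus J_2$, $B=J_2\setminus J_1$. The common part cancels in $(\star)$, so the identity reduces to $\sum_{j\in A}a_j-\sum_{j\in B}a_j=(|B|-|A|)\,k$, with $A,B$ disjoint subsets of $\{1,2,3,4\}$ and $|B|\ge|A|+1$. Disjointness gives $|A|+|B|\le 4$, hence $|A|\le 1$, so only two sub-cases remain. For $|A|=0$ the left side is $-\sum_{j\in B}a_j<0$, while the right side is $\ge k\ge a_4>0$, an immediate contradiction. For $|A|=1$, say $A=\{i\}$ and $|B|\ge 2$, the left side equals $a_i-\sum_{j\in B}a_j<a_i\le a_4\le k\le(|B|-1)k$, again contradicting $(\star)$.

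Thus no such $J_1\neq J_2$ exists, proving that $\{k+a_1,k+a_2,k+a_3,k+a_4\}$ has distinct subset sums. The only step requiring any care is the unequal-cardinality sub-case, where the assumption $k\ge a_4$ is used precisely to ensure that any positive value of the left-hand side of $(\star)$ is strictly smaller than $k$; fortunately, because $|A|+|B|\le 4$ forces $|A|\in\{0,1\}$, there is no real combinatorial obstacle.
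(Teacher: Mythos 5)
Your proof is correct and follows essentially the same idea as the paper's: collisions between equal-cardinality subsets reduce to a repeated subset sum in the original set $\{a_1,a_2,a_3,a_4\}$, while collisions between subsets of different cardinality are ruled out by $k \geq a_4$. If anything, your symmetric-difference bookkeeping is more systematic than the paper's argument, which only explicitly checks two-element-versus-two-element collisions and the case of a two- or three-element subset equalling a single shifted element.
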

\begin{proof}
    Since $\{a_1, a_2, a_3, a_4\}$ has 4 distinct elements, so has $\{k+a_1, k+a_2, k+a_3, k+a_4\}$. Now assume that there exists a two-element set with subset sum equalling another two-element set. Because of the ascending order of the elements, this could happen only if $k+a_1 + k+a_4 = k+a_2 + k+a_3$, but this would imply $a_1 + a_4 = a_2 + a_3$ which contradicts the fact that $\{a_1, a_2, a_3, a_4\}$ is a set with distinct subset sums. Hence our assumption is wrong.
    Suppose, if possible, let's assume that the sum of a two-element or three-element subset equals another element, say $k + a_i$ for some $i$. But this implies $a_i > k$ which is a contradiction to our choice of $k$. Hence the lemma.     
\end{proof}
\begin{lemma}\label{newodd}
    If $\{a_1, a_2,\ldots, a_j\}$ is the set of edge labels incident on an AR-vertex with all elements even, then any vertex with degree $(j+1)$ and the set of edge labels incident being $\{a_1, a_2, \ldots, a_j, b\}$ is an AR-vertex if $b$ is an odd number. 
\end{lemma}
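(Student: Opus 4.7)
The plan is to argue by a simple parity/cancellation split on whether the new odd label $b$ appears in the two subsets being compared. Let $v$ be a vertex of degree $j+1$ whose incident edge labels form the set $S=\{a_1,\dots,a_j,b\}$, where $a_1,\dots,a_j$ are even and, by hypothesis, already produce $2^j$ distinct subset sums, and where $b$ is odd. First I would note that $b$ is automatically distinct from every $a_i$ since $b$ is odd and each $a_i$ is even, so $S$ has $j+1$ distinct elements and there are $2^{j+1}$ subsets to worry about. Suppose for contradiction that two distinct subsets $T_1,T_2\subseteq S$ satisfy $\sigma(T_1)=\sigma(T_2)$, where $\sigma$ denotes the sum.

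I would split into three cases according to where $b$ lies. If $b\notin T_1$ and $b\notin T_2$, then both $T_1$ and $T_2$ are distinct subsets of $\{a_1,\dots,a_j\}$, so equality of sums contradicts the AR-property of the original vertex. If $b\in T_1\cap T_2$, then subtracting $b$ from both sides gives two distinct subsets of $\{a_1,\dots,a_j\}$ with equal sums, again contradicting the hypothesis. In the remaining case (say $b\in T_1$ and $b\notin T_2$), I would invoke parity: $\sigma(T_2)$ is a sum of even numbers and hence even, while $\sigma(T_1)=b+\sigma(T_1\setminus\{b\})$ is odd plus even, hence odd. So the two sums cannot agree.

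There is no real obstacle here; the only thing to be careful about is including the empty subset (its sum $0$ is even, so it is handled correctly in the parity case) and explicitly pointing out that distinctness of the labels in $S$ is inherited from the parity difference between $b$ and the $a_i$'s. The whole argument is essentially the observation that adjoining an element of opposite parity to a set with distinct subset sums preserves the distinct-subset-sum property, specialised here to the edge-label set of a single vertex.
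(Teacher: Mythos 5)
Your proposal is correct and follows essentially the same route as the paper: a parity split showing that sums omitting $b$ are even, sums containing $b$ are odd, and clashes within either class are ruled out by the AR-property of the original vertex. Your three-case write-up is just a more explicit version of the paper's brief argument.
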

\begin{proof}
    No linear combination of the first $j$ labels gives an odd sum. Every linear combination containing the edge label $b$ can be formed only by including $b$. Hence the lemma.  
\end{proof}
\begin{lemma}\label{max10}
    Let $\{a_1, a_2, a_3\}$ be a set with distinct subset sums with $a_i < a_j$ for $i < j$, $\{a_1, a_2, a_3, a_4\}$ is a set with distinct subset sums except for at most 10 values of $a_4$ provided $a_4 \notin \{a_1, a_2, a_3\}$.
\end{lemma}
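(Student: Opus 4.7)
The plan is to reduce bad values of $a_4$ to positive differences between subset sums of $\{a_1, a_2, a_3\}$ and then count the possibilities. Since the three-element set already has its eight subset sums distinct by hypothesis, the sixteen subset sums of $\{a_1, a_2, a_3, a_4\}$ split into two families of eight --- those not involving $a_4$, which are pairwise distinct by hypothesis, and those involving $a_4$, which are pairwise distinct after subtracting the common $a_4$. A collision can therefore occur only between the two families: $T_1 = T_2 + a_4$ for two distinct subset sums $T_1, T_2$ of $\{a_1, a_2, a_3\}$. Hence $a_4$ is bad if and only if it lies in the set
\[
D = \{T_1 - T_2 : T_1, T_2 \text{ are subset sums of } \{a_1, a_2, a_3\},\ T_1 > T_2\}.
\]

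Next, I would bound $|D|$. For any two subsets $A, B \subseteq \{1,2,3\}$, writing $s_A = \sum_{i \in A} a_i$, the identity $s_A - s_B = s_{A \setminus B} - s_{B \setminus A}$ allows me to restrict to disjoint $A$ and $B$ without changing the difference. Counting unordered disjoint pairs of subsets of $\{1,2,3\}$ (excluding the pair of two empty sets) amounts to distributing each of the three indices among three classes (in $A$, in $B$, in neither) and then quotienting by the swap $A \leftrightarrow B$, giving $(3^3 - 1)/2 = 13$ pairs. Hence at most thirteen distinct positive values can arise in $D$.

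Finally, among these thirteen pairs, exactly three --- namely $\{\{1\}, \emptyset\}$, $\{\{2\}, \emptyset\}$, $\{\{3\}, \emptyset\}$ --- contribute the values $a_1, a_2, a_3$ to $D$. The hypothesis $a_4 \notin \{a_1, a_2, a_3\}$ rules these three values out as candidates, leaving at most $13 - 3 = 10$ bad values of $a_4$. The whole argument is elementary; the only conceptual content is the observation that a collision must cross between the two families together with the disjoint-reduction identity, and the only mild obstacle is the careful bookkeeping of the thirteen differences (explicitly: $a_1{+}a_2$, $a_1{+}a_3$, $a_2{+}a_3$, $a_1{+}a_2{+}a_3$, $a_2{-}a_1$, $a_3{-}a_1$, $a_3{-}a_2$, $a_1{+}a_3{-}a_2$, $a_2{+}a_3{-}a_1$, $|a_1{+}a_2{-}a_3|$ --- precisely ten).
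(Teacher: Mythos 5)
Your proof is correct and follows essentially the same route as the paper: the paper simply asserts that $a_4$ is bad exactly when it equals one of the ten listed differences/sums of subset sums of $\{a_1,a_2,a_3\}$, which is precisely the list you derive. Your write-up is more detailed (the cross-family collision observation, the disjoint-reduction identity, and the count $(3^3-1)/2=13$ minus the three singleton values excluded by $a_4\notin\{a_1,a_2,a_3\}$), but it arrives at the identical set of ten values, so it is the same argument made explicit.
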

\begin{proof}
    If $\{a_1, a_2, a_3\}$ has distinct subset sums and $a_4 \neq a_i$ for $i = 1,2,3$, then $\{a_1, a_2, a_3, a_4\}$ will have two distinct subsets with same subset sum if and only if $a_4 \in \{a_2 - a_1, a_3 - a_2, a_3 - a_1, |a_3 - (a_1 + a_2)|, a_3 + a_1 - a_2, a_3 + a_2 - a_1, a_1 + a_2, a_1 + a_3, a_2 + a_3, a_1 + a_2 + a_3 \}$. Hence the lemma.
\end{proof}
\begin{observation}\label{3+even}
    From Lemma \ref{max10}, if $\{a_1, a_2, a_3\}$ is the set of distinct odd numbers and $c$ is an even number greater than all three, then any vertex with a set of edge labels incident being $\{a_1, a_2, a_3, c\}$ is an AR-vertex if no pair of the initial three vertices sum to $c$. 
\end{observation}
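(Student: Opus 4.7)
The plan is to deduce this directly from Lemma \ref{max10}, by showing that none of the (at most) ten forbidden values for $a_4$ listed there can coincide with our $c$, under the stated hypotheses. First I would note that since $c$ is even while $a_1,a_2,a_3$ are odd, automatically $c \notin \{a_1,a_2,a_3\}$, so the hypothesis of Lemma \ref{max10} is satisfied and it suffices to rule out the ten explicit expressions.

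Next I would split the ten expressions from Lemma \ref{max10} into two groups according to parity. The four expressions $|a_3-(a_1+a_2)|$, $a_3+a_1-a_2$, $a_3+a_2-a_1$, and $a_1+a_2+a_3$ are sums or signed sums of an odd number of odd integers, hence odd; since $c$ is even, it cannot equal any of these, so these are excluded on parity grounds alone. The remaining six expressions $a_2-a_1$, $a_3-a_2$, $a_3-a_1$, $a_1+a_2$, $a_1+a_3$, $a_2+a_3$ are even, so parity alone will not help and I have to argue further.

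For the three differences $a_2-a_1$, $a_3-a_2$, $a_3-a_1$, I would use the size hypothesis: each difference is strictly smaller than $a_3$, and by assumption $c > a_3$, so $c$ cannot equal any of these three. Finally, the three pairwise sums $a_1+a_2$, $a_1+a_3$, $a_2+a_3$ are precisely the pairwise sums of elements of $\{a_1,a_2,a_3\}$, and the hypothesis of the observation is exactly that none of these equals $c$. Thus all ten forbidden values are excluded, and Lemma \ref{max10} concludes that $\{a_1,a_2,a_3,c\}$ has distinct subset sums, making $v$ an AR-vertex.

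There is no real obstacle here; the proof is essentially a bookkeeping exercise dividing the ten forbidden values by parity and size. The only mild care required is checking that the parity count of $|a_3-(a_1+a_2)|$ is indeed odd regardless of which side of the absolute value we are on, which is immediate since negating an odd number preserves its parity.
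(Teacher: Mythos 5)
Your proof is correct and follows the route the statement itself points to: enumerate the ten forbidden values of Lemma \ref{max10}, eliminate the four signed sums of all three odd labels by parity (they are odd, $c$ is even), eliminate the three differences by magnitude (each is less than $a_3 < c$), and eliminate the three pairwise sums by the explicit hypothesis that no pair of $a_1,a_2,a_3$ sums to $c$. One small point you should add: Lemma \ref{max10} has two hypotheses, and your parity remark only verifies $c \notin \{a_1,a_2,a_3\}$; you also need that $\{a_1,a_2,a_3\}$ itself has distinct subset sums, which holds because they are three distinct odd numbers (Lemma \ref{3odd}). With that one-line citation the argument is complete. For comparison, the paper's own proof does not go through the list at all: it argues directly by parity and size that any subset sum involving $c$ is odd (hence cannot collide with a singleton or with another sum of the wrong parity) and that $a_1+a_2+a_3$ is odd, leaving the only possible even-even collision, a pairwise sum of the $a_i$ equalling $c$, to the stated hypothesis. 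Your version is more systematic and leans on the exhaustiveness of Lemma \ref{max10}'s list, while the paper's is a shorter self-contained parity-and-magnitude sketch; both establish the observation.
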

\begin{proof}
    Two element sets with $c$ form an odd number which cannot be equal to any singleton. The three-element sum of the first three numbers is odd. Hence the lemma. 
\end{proof}
\begin{observation}\label{3+odd}
    From Lemma \ref{max10}, if $\{a_1, a_2, a_3\}$ is a set of distinct odd numbers with $a_1 < a_2 < a_3$ and $a_4$ is another odd number with $a_4 > a_3$, then a vertex with edge labels incident being $\{a_1, a_2, a_3, a_4\}$ will be an AR-vertex if  $a_1 + a_2 + a_3 \neq a_4$ and $a_1 + a_4 \neq a_2 + a_3$. 
\end{observation}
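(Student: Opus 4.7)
The plan is to reduce Observation \ref{3+odd} to a direct application of Lemma \ref{max10}. First I would verify the hypothesis of that lemma, namely that $\{a_1, a_2, a_3\}$ itself has distinct subset sums. Since $a_1, a_2, a_3$ are distinct odd numbers, the only non-trivial equality that could spoil distinct subset sums is $a_1+a_2=a_3$; but $a_1+a_2$ is even while $a_3$ is odd, so this cannot occur. Also, $a_4>a_3>a_2>a_1$ gives $a_4\notin\{a_1,a_2,a_3\}$, so Lemma \ref{max10} applies.

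Next I would run through the list of ten forbidden values for $a_4$ produced by Lemma \ref{max10} and eliminate them using parity and the size condition $a_4>a_3$. Six of these values, namely $a_2-a_1$, $a_3-a_2$, $a_3-a_1$, $a_1+a_2$, $a_1+a_3$, $a_2+a_3$, are even (sum or difference of two odd numbers), whereas $a_4$ is odd; so none of these equalities can hold. This handles six of the ten cases for free.

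The remaining four forbidden values are $|a_3-(a_1+a_2)|$, $a_3+a_1-a_2$, $a_3+a_2-a_1$ and $a_1+a_2+a_3$, all of which are odd. I would then show that the first two are strictly less than $a_3$, and hence strictly less than $a_4$: indeed $a_3+a_1-a_2<a_3$ since $a_1<a_2$, and $|a_3-(a_1+a_2)|\le \max(a_3,\,a_1+a_2-a_3)<a_3$ because $a_1+a_2<2a_3$. So these two values cannot equal $a_4$ either.

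The only genuine obstructions therefore come from the last two forbidden values, $a_3+a_2-a_1$ and $a_1+a_2+a_3$. The equation $a_4=a_3+a_2-a_1$ is equivalent to $a_1+a_4=a_2+a_3$, which is ruled out by hypothesis; and $a_4=a_1+a_2+a_3$ is likewise ruled out by hypothesis. Hence all ten forbidden cases of Lemma \ref{max10} are excluded, proving that $\{a_1,a_2,a_3,a_4\}$ has distinct subset sums, i.e.\ that $v$ is an AR-vertex. The main (and only slightly fiddly) step is the size estimate ruling out $|a_3-(a_1+a_2)|$ and $a_3+a_1-a_2$; everything else is pure parity.
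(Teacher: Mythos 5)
Your proof is correct and follows essentially the same parity-plus-ordering argument as the paper, just organized more explicitly as a check of the ten forbidden values from Lemma \ref{max10}, which the paper's terser proof handles implicitly (odd-sum collisions reduce to $a_1+a_2+a_3=a_4$, even-sum collisions to $a_1+a_4=a_2+a_3$). One small slip in your write-up: the estimate should read $|a_3-(a_1+a_2)|\le\max\bigl(a_3-(a_1+a_2),\,a_1+a_2-a_3\bigr)<a_3$, since $\max(a_3,\,a_1+a_2-a_3)<a_3$ as literally written is false; the intended bound is nonetheless valid and the conclusion stands.
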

\begin{proof}
    An odd sum could repeat only when the sum of a three-element set equals the singleton. Two-element sums could be equal only one way because of the restriction imposed by the ascending order of elements. 
\end{proof}
\begin{observation}\label{3ap+new}
    From Lemma \ref{max10}, if $a_1, a_2$ and $a_3$ are in arithmetic progression with common difference $d$ with $a_1 > d$, then a vertex with edge labels incident being $\{a_1, a_2, a_3, a_4\}$, $a_4 \notin \{a_1, a_2, a_3\}$, is an AR-vertex if $a_4 \notin \{d, 2d, a_1 - d, a_3 + d, a_1 + a_2, a_1 + a_3, a_2 + a_3, a_1 + a_2 + a_3\}$. 
\end{observation}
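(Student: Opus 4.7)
The plan is to obtain Observation \ref{3ap+new} as a direct corollary of Lemma \ref{max10}, simply by substituting the arithmetic progression relations $a_2 = a_1 + d$ and $a_3 = a_1 + 2d$ into the ten-element forbidden list of Lemma \ref{max10} and then simplifying.

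First I would verify the hypothesis of Lemma \ref{max10}, namely that $\{a_1,a_2,a_3\}$ already has distinct subset sums. This is exactly what Lemma \ref{ap} provides, since $a_1 > d$ gives $a_1 \neq d$. Thus Lemma \ref{max10} applies, and $\{a_1,a_2,a_3,a_4\}$ has distinct subset sums provided $a_4 \notin \{a_1,a_2,a_3\}$ and $a_4$ avoids the ten values listed there.

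Next I would rewrite those ten expressions using the AP structure. Four of them become multiples of $d$ (or small perturbations): $a_2-a_1 = d$, $a_3-a_2 = d$, $a_3-a_1 = 2d$, and $|a_3-(a_1+a_2)| = |d-a_1|$; here the hypothesis $a_1 > d$ lets me drop the absolute value and write this as $a_1 - d$. The remaining six reduce to $a_3+a_1-a_2 = a_2$, $a_3+a_2-a_1 = a_3+d$, $a_1+a_2$, $a_1+a_3$, $a_2+a_3$, and $a_1+a_2+a_3$.

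Finally I would prune the list: the value $d$ appears twice, and $a_2$ is already ruled out by the hypothesis $a_4 \notin \{a_1,a_2,a_3\}$. What remains is exactly $\{d,\,2d,\,a_1-d,\,a_3+d,\,a_1+a_2,\,a_1+a_3,\,a_2+a_3,\,a_1+a_2+a_3\}$, matching the statement. The only subtle point, and the place where the hypothesis $a_1 > d$ is essential, is resolving the absolute value $|d-a_1|$; everything else is mechanical substitution and bookkeeping.
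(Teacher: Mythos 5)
Your proposal is correct and follows essentially the same route as the paper, which also derives the observation from Lemma \ref{max10} by noting that the listed set exhausts the possible coincidences; you simply make explicit the substitution $a_2=a_1+d$, $a_3=a_1+2d$, the resolution of $|a_3-(a_1+a_2)|=a_1-d$ via $a_1>d$, and the removal of the duplicate $d$ and of $a_2$. The additional appeal to Lemma \ref{ap} to confirm that $\{a_1,a_2,a_3\}$ has distinct subset sums is a sound and welcome detail the paper leaves implicit.
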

\begin{proof}
    The set actually exhausts all possible scenarios where the new number could create two distinct subsets with the same subset sum. 
\end{proof}
\begin{lemma}\label{bsum}
    
     If $\{a_1, a_2, \ldots, a_j\}$ is the set of edge labels incident on an AR-vertex, then any vertex with degree $(j+1)$ and the set of edge labels incident being $\{a_1, a_2,\ldots, a_j, b\}$ is an AR-vertex if $b > a_1 + a_2 + a_3 +\ldots+ a_j$.
  \end{lemma}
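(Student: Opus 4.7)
The plan is to argue by contradiction: suppose the vertex with edge labels $\{a_1,\ldots,a_j,b\}$ is not an AR-vertex, so there exist two distinct subsets $S_1, S_2 \subseteq \{a_1,\ldots,a_j,b\}$ with the same sum. I would then split into three cases according to whether $b$ lies in neither, both, or exactly one of $S_1$ and $S_2$.

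First I would handle the easy cases. If $b \notin S_1$ and $b \notin S_2$, then $S_1$ and $S_2$ are distinct subsets of $\{a_1,\ldots,a_j\}$ with equal subset sums, contradicting the hypothesis that $\{a_1,\ldots,a_j\}$ sits at an AR-vertex. If $b$ lies in both, then $S_1 \setminus \{b\}$ and $S_2 \setminus \{b\}$ are still distinct (since $S_1 \neq S_2$), are subsets of $\{a_1,\ldots,a_j\}$, and still have equal sums (subtracting $b$ from both sides), which is again a contradiction.

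The interesting case, and the only place where the hypothesis $b > \sum_{i=1}^{j} a_i$ gets used, is when $b$ belongs to exactly one of them, say $b \in S_1$ and $b \notin S_2$. Then
\[
\sum_{x \in S_1} x \;\geq\; b \;>\; \sum_{i=1}^{j} a_i \;\geq\; \sum_{x \in S_2} x,
\]
which contradicts the equality of the two subset sums. Combining the three cases completes the argument.

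I do not expect any real obstacle here; the assumption that $b$ exceeds the total weight of the previous edges is exactly what is needed to make any subset containing $b$ outweigh any subset avoiding it, so the case analysis closes cleanly. The one thing I would be careful about is ensuring the proof covers the possibility $S_1 = \{b\}$ and $S_2 = \emptyset$ (or vice versa); this is subsumed by Case 3, since $b > \sum a_i \geq 0$ handles it with the same inequality chain.
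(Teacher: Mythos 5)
Your proof is correct and is essentially the same argument as the paper's: the paper likewise observes that every subset sum avoiding $b$ is at most $a_1+\cdots+a_j < b$ while every subset sum containing $b$ exceeds $b$, with the AR-hypothesis handling collisions within each of the two families; your contradiction-style case split just makes these steps explicit.
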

\begin{proof}
    The largest number appearing as a linear combination of the first $j$ edge labels is the sum of all $j$ labels which is less than $b$ by choice. Every linear combination containing the edge label $b$ will be greater than $b$. Hence the lemma.  
\end{proof}

\section{Infinite Classes of AR-graphs.}

Whenever a new labeling is introduced, identifying graph classes which satify such a labeling is an important question to be addressed. One should keep in mind that there are graph labelings like Leech labeling \cite{Lee} which was initially defined in 1975 only for trees and till date there are only five known Leech trees. Still its extension to the class of all graphs provides infinite family of Leech graphs \cite{See}. In this section, we provide some infinite families of graphs which are AR-graphs.\\
 
{\bf \noindent (1) Paths and cycles.} \\
 For every injective edge labeling of G, Lemma \ref{1or2} assures vertices of a path or cycle are AR. Let the edges be labelled from 1 to $m$ in any order, paths and cycles are AR-graphs.\\
 
{\bf \noindent (2) Attaching a path to a cycle.}  \\
The only vertex that raises a concern would be the vertex where path is connected to the cycle. Let $v$ be that vertex. Since $v$ has degree 3, $v$ is not trivially AR. Also, the smallest such graph has 4 edges. Label the edges incident on $v$ as 1, 2 and 4 and label the remaining edges using $\{3, 5,6,7,\dots, m\}$. $G$ will be AR under this labeling which is from $E\rightarrow\{1,2,3,\dots, m\}$. Hence $G$ is an AR-graph. \\

{\bf \noindent (3) Attaching a path to a pendant vertex in an AR-graph.}\\
Direct consequence of Lemma \ref{1or2} and the fact that the initial graph is an AR-graph.\\

{\bf \noindent (4) Attaching a path of length at least $2$ to a degree $2$ vertex in an AR-graph.}\\
Since the initial graph $G$ has an AR-labeling $f:E\rightarrow \{1,2,3,\dots,m(G)\}$ and the path to be attached has at least two edges, we can use edge labels $m+1$ and $m+2$ while labeling the new graph. Let $v$ be the vertex on which the new path is attached. Keeping the edge labels of $G$ from $f$, let the two edges in $G$ incident on $v$ be some $x$ and $y$. Label the third edge incident on $v$ as $m+1$ if $x+y\neq m+1$, otherwise label the edge as $m+2$. By Lemma \ref{3lab}, $v$ is an AR-vertex. Hence the result.\\

{\bf \noindent (5) Attaching a path to every vertex of a cycle.}\\
    Let $C_{n_1}$ be the cycle with $n_1$ vertices. Let $G$ be a graph formed by attaching paths to all vertices of $G$.  Suppose $m\geq 4{n_1} - 1$. Label all the edges incident on the vertices of the cycle using consecutive odd numbers starting from 1. By Lemma \ref{3odd}, every vertex on the cycle will be AR-vertices. Use the labels $\{2,4,6,\dots, 4{n_1}-2, 4{n_1}, 4{n_1}+2, \ldots, m\}$ on the remaining edges. Applying Lemma \ref{1or2}, this labeling is AR and since it is from $E \rightarrow \{1,2,3,\dots, m\}$, $G$ is an AR-graph.\\
    Suppose $m \leq 4{n_1}-2$. Let the cycle be $v_{1}v_{2}v_{3}\dots v_{n_1}$. Label the edges on the cycle using 1 to $n_1$ ($v_1 v_2$ as 1, $v_2 v_3$ as 2, $\dots$, $v_{n_1} v_1$ as $n_1$). Now, label the edges not in the cycle but incident on its vertices as follows. Label the edge incident on $v_i$ as $n_1 + i - 1$ for $i = 2,3,4, \dots n_1-1 $ and the edge incident on $v_{n_1}$ as $2n_1$ and edge incident on $v_1$ as $2n_1-1$. Label the remaining edges using numbers less than or equal to $m$, and not yet used in the labeling.\\
    The labels of edges incident on vertex $v_k$ on the cycle are $k - 1, k$ and $n_1 + k  - 1$ if $1 < k  < n_1$. $n_1$ - 1, $n_1$ and $2n_1$ are the labels of edges incident on $v_{n_1}$. 1, $n_1$ and $2n_1$ are the labels of edges incident on $v_1$. 
Due to Lemma \ref{1or2}, the vertices not in the cycle are trivially AR-vertices.
    By Lemma \ref{3lab}, the vertex $v_k$ on the cycle is $AR$ since $k-1 + k  \neq n_1 + k  -1$, if $k  < n_1$ and $v_1$ is $AR$ since $n_1 + 1 \neq 2n_1$, as a cycle has more than two vertices. And $v_{n_1}$ is an AR-vertex since $n_1 - 1 + n_1 \neq 2n_1$. So, attaching a path to every vertex of a cycle results in an AR-graph.\par
 \begin{figure}[ht]\center
  \includegraphics[scale=0.9]{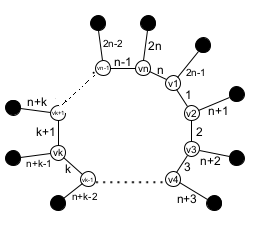} \\
  \caption{AR-labeling of attaching path of length 1 to $C_n$}\label{Figure 1}
\end{figure}
    Figure \ref{Figure 1} illustrates the AR-labeling of attaching a path of length 1 to every vertex of a cycle $C_n$. By Lemma \ref{1or2}, this labeling can be extended to a graph obtained by attaching a path of arbitrary length to every vertex of $C_n$.\\

{\bf \noindent (6) The Cartesian product of cycles with $K_2$, $C_n \square K_2$.}\\
We have two copies of the same cycle with $n$ vertices and each vertex in a cycle
is adjacent to the corresponding vertex in the other cycle.\\
So if  $v_1 v_2 v_3 \dots v_n v_1$ is the first cycle and  $u_1 u_2 u_3 \dots u_n u_1$ is the second cycle, the additional
edges would be $v_1 u_1 , v_2 u_2, \dots , v_n u_n$.\\
Consider the labeling $f$, $f(v_iv_{i+1}) = i, f(u_iu_{i+1}) = n+i$ for $1\leq i \leq n-1$,\\ $f(v_nv_{1}) = n, f(u_nu_{1}) = 2n$  and $f(u_iv_i) = 2n+i$ for every $ i$. \\
It is easy to verify that $f$ is an AR-labeling of $C_n \square K_2$, which makes it an AR-graph. In fact, we have a stronger result that every Hamiltonian cubic graph is an AR-graph.

\section{Cubic AR-graphs}

 A cubic graph is a 3-regular graph, that is, a graph in which the degree of each vertex is 3. Hence a cubic graph $G$ on $n$ vertices has $\frac{3n}{2}$ edges. Moreover, labeling each edge of $G$ using an odd number, we get an AR-labeling of $G$ by Lemma \ref{3odd}. So, there exists an AR-labeling $f:E(G)\rightarrow \{1,2,3,\dots,3n-1\}$. Also if $G$ is cubic, then $G$ has even number of vertices. That is, $n = 2k$ for
some $k \in \mathbb{N}$. $K_4$ is the cubic graph on 4 vertices and the labeling in Figure \ref{K4} shows that $K_4$ is an AR-graph.\par
\begin{figure}[ht]
    \centering
    \includegraphics[scale=0.65]{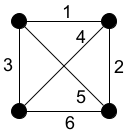}
    \caption{AR-labeling of $K_4$}
    \label{K4}
\end{figure}

There are exactly 2 cubic graphs on 6 vertices, one is edge-transitive and the other is
not. Figure \ref{cubic on 6 vertices} shows that both these cubic graphs are AR-graphs.
\begin{figure}[ht]
    \centering
    \includegraphics[scale=0.65]{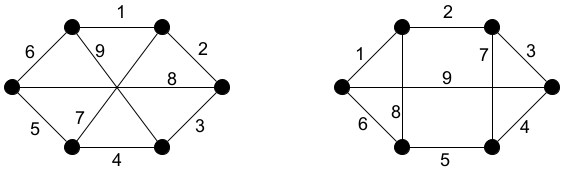}
    \caption{AR-labeling of Cubic Graphs with Order 6}
    \label{cubic on 6 vertices}
\end{figure}

\begin{theorem}
    All Hamiltonian cubic graphs are AR-graphs.
\end{theorem}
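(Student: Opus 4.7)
The plan is to exploit the decomposition of a Hamiltonian cubic graph $G$ on $n=2k$ vertices into a Hamilton cycle $C$ plus a perfect matching $M$, label the cycle edges with $\{1,\ldots,n\}$ and the matching edges with $\{n+1,\ldots,3n/2\}$, and reduce the AR-condition to a system of distinct representatives that I will solve via Hall's marriage theorem.

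First I fix a Hamilton cycle $C=v_1v_2\cdots v_nv_1$. Since $G$ is $3$-regular and $C$ consumes two of the three edges at every vertex, $M:=E(G)\setminus E(C)$ is a $1$-regular spanning subgraph, hence a perfect matching of size $n/2=k$, giving $|E(G)|=3n/2$. I label the cycle edge $v_iv_{i+1}$ (indices mod $n$) with the integer $i$; this uses the labels $\{1,\ldots,n\}$, leaving $L:=\{n+1,\ldots,3n/2\}$, of size $k$, as the pool for the $k$ matching edges.

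Let $\sigma_1=n+1$ and $\sigma_i=2i-1$ for $2\le i\le n$; these are the pairwise sums of the two cycle-edge labels at each vertex. Every vertex lies on $C$ (since $C$ is spanning) and is incident with two cycle-labels at most $n$ and one matching-label at least $n+1$, so the matching label is the largest of the three. By Lemma \ref{3lab}, $v_i$ is an AR-vertex iff its matching label differs from $\sigma_i$. So it suffices to produce a bijection $\phi:M\to L$ with $\phi(v_iv_j)\notin\{\sigma_i,\sigma_j\}$ for every $v_iv_j\in M$.

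To build $\phi$, I form the bipartite ``allowed'' graph $B$ on $M\cup L$ with edge $\{e,\ell\}$ iff $\ell$ is not forbidden at either endpoint of $e$, and apply Hall's theorem. The essential observation is that a label $\ell\in L$ is forbidden at $v_i$ precisely when $\sigma_i=\ell$: the label $n+1$ is doubly forbidden, at $v_1$ and at $v_{k+1}$ (since $\sigma_{k+1}=2k+1=n+1$); every other odd $\ell\in L$ is forbidden at exactly one vertex; and every even $\ell\in L$ is forbidden nowhere. For $S\subseteq M$: if $|S|\ge 3$, no label can be forbidden by every edge in $S$ (that would require at least $|S|$ distinct forbidding vertices), so $|N_B(S)|=k$; if $|S|=2$, only $\ell=n+1$ can be forbidden by both edges, so $|N_B(S)|\ge k-1$; if $|S|=1$, a single matching edge has at most two forbidden labels, so $|N_B(S)|\ge k-2$. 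For $k\ge 3$ (that is, $n\ge 6$), each case satisfies $|N_B(S)|\ge |S|$, Hall's theorem yields $\phi$, and the resulting labeling is an AR-labeling. The residual case $n=4$ is $K_4$, already handled in Figure \ref{K4}. The main obstacle is the $|S|=2$ subcase of Hall's condition, where one must notice that only $\ell=n+1$ can be doubly forbidden (because of the accidental equality $\sigma_1=\sigma_{k+1}$); the rest of the case analysis is routine counting.
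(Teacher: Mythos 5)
Your proof is correct, and it follows the paper's skeleton up to the crucial assignment step: like the paper, you label the Hamilton cycle $1,\dots,n$, note that the vertex sums of the two cycle labels are $n+1$ at $v_1$ and $2i-1$ at $v_i$ (so only $n+1$ repeats, at $v_1$ and $v_{k+1}$), observe the matching label at each vertex is the largest of the three, and reduce everything to Lemma \ref{3lab}. Where you diverge is in how the matching edges receive the labels $\{n+1,\dots,3n/2\}$ while avoiding the vertex sums: the paper invokes its bespoke machinery (Lemma \ref{4lab} and Observation \ref{klab}), essentially a greedy procedure plus a case analysis on the last four independent edges, which requires $k\ge 4$ and forces the $6$-vertex cubic graphs to be settled separately by explicit figures; you instead encode the constraints as a bipartite ``allowed'' graph between matching edges and labels and verify Hall's condition, using the precise accounting that even labels are never forbidden, odd labels other than $n+1$ are forbidden at exactly one vertex, and $n+1$ at exactly two. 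Your route buys a cleaner and more robust verification (each edge excludes at most two labels, so Hall's condition is nearly automatic), works already for $k\ge 3$ so that only $K_4$ remains as a base case, and generalizes readily to similar ``avoid a short forbidden list per edge'' assignments; the paper's approach avoids citing Hall's theorem and reuses its Lemma \ref{4lab}/Observation \ref{klab} toolkit, which it also needs for the non-Hamiltonian cycle-partition theorem that follows.
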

    
\begin{proof}
    Let $G$ be a cubic graph with more than 6 vertices (since 4 and 6 are settled).     For a cubic graph with $n = 2k $ vertices, there will be $3k$ edges. Being Hamiltonian,
there exists a cycle $C_n$ in $G$ which includes all the vertices of $G$.  Let the cycle $C_n$ be $v_1 v_2 v_3 \dots v_n v_1$. Label the edges of the cycle from 1 to $n$, $v_1 v_2$ as 1, $v_2 v_3$ as 2, $\dots$, $v_j v_{j+1}$ as $j$, $\dots$, $v_{n-1} v_n$ as $n-1$ and $v_n v_1$ as $n$. Now we have to label
the remaining edges using numbers $n+1, n+2,\dots, n + k$.  Since we have labelled edges
continuously using consecutive natural numbers, the only sum that repeats would be $n+1$ which appears as the sum of
edge weights for $v_1$ as well as $v_{k+1}$.\par
For each vertex $u$ in G, consider the sum of weights of edges incident on $u$ as a 
label. So now we have $k$ distinct numbers and $2k$ vertices labelled with only one
label repeating exactly once. Since $G$ has more than 6 vertices, we have $k \geq 4$.
Applying $Observation$ \ref{klab}, there exists an AR-labeling of $G$ (Edge labels
given are distinct from the sum of weights of edges already incident on those
vertices). Since we have used only numbers from 1 to $3k$, $G$ is an AR-graph.
\end{proof}
\begin{theorem}
     Cubic graphs whose vertex set could be expressed as a disjoint union of vertex
sets of cycles in the graph are AR-graphs.
\end{theorem}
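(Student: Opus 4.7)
The plan is to imitate the Hamiltonian-case construction one cycle at a time and then finish off the remaining edges via Observation \ref{klab}. Let $G$ be a cubic graph on $n=2k$ vertices whose vertex set partitions as $V(C^{(1)})\sqcup\cdots\sqcup V(C^{(t)})$ for cycles $C^{(i)}$ of length $\ell_i$, with $\sum_i \ell_i = 2k$. Then these cycles form a 2-factor $F$ with $2k$ edges, so the remaining $k$ edges form a perfect matching $M$. The case $t=1$ is the previous theorem, and the small cases $n=4,6$ are exhibited in Figures \ref{K4} and \ref{cubic on 6 vertices}, so I will assume $n\ge 8$ and therefore $k\ge 4$, which is what Observation \ref{klab} needs.

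I would label the edges of $F$ from $\{1,\dots,2k\}$, processing the cycles in order. Put $a_i=\sum_{j<i}\ell_j$, write $C^{(i)}$ as $v_1^{(i)}v_2^{(i)}\cdots v_{\ell_i}^{(i)}v_1^{(i)}$, and assign label $a_i+j$ to $v_j^{(i)}v_{j+1}^{(i)}$ for $j<\ell_i$ and label $a_i+\ell_i$ to the closing edge. A direct computation gives
\[
S(v_j^{(i)})=\begin{cases}2a_i+\ell_i+1,&j=1,\\2a_i+2j-1,&2\le j\le\ell_i,\end{cases}
\]
where $S(v)$ denotes the sum of the cycle-edge labels at $v$. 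Regarding each $S(v)$ as the ``vertex label'' of $v$, I would then apply Observation \ref{klab} with the $k$ distinct numbers $\{2k+1,\dots,3k\}$ and the $k$ independent edges of $M$ to label $M$ so that every matching-edge label differs from the $S$-labels at its two endpoints. The resulting map is a bijection $f:E(G)\to\{1,\dots,3k\}$; at any vertex $v$ the three incident labels are two cycle labels $a,b\le 2k$ with $a+b=S(v)$ and one matching label $c\ge 2k+1$, so $c$ is the unique maximum and $c\ne a+b$ by construction, whence $v$ is AR by Lemma \ref{3lab}.

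The main obstacle and the only nontrivial verification is that no value of $S$ is attained at three or more vertices; otherwise Observation \ref{klab} does not apply. Within cycle $C^{(i)}$ the odd values $2a_i+3,2a_i+5,\dots,2a_i+2\ell_i-1$ are already distinct, and the wrap-around sum $2a_i+\ell_i+1$ either coincides with exactly one of them (when $\ell_i$ is even, at $j=\ell_i/2+1$) or contributes a fresh even value (when $\ell_i$ is odd). Across two distinct cycles the odd-sum ranges $[2a_i+3,2a_i+2\ell_i-1]$ are pairwise disjoint because $2a_j+3\ge 2a_i+2\ell_i+3>2a_i+2\ell_i-1$ whenever $j>i$, so no cross-cycle odd collision is possible. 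The only remaining risk is two odd-length cycles sharing their even wrap-around value, which would require $2a_i+\ell_i=2a_j+\ell_j$ for distinct $i,j$; but $2a_{i+1}+\ell_{i+1}-(2a_i+\ell_i)=\ell_i+\ell_{i+1}>0$, so $2a_i+\ell_i$ is strictly increasing in $i$ and no such collision can occur. This strict monotonicity is what rescues the multiplicity count and completes the plan.
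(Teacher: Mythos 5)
Your proposal is correct and follows essentially the same route as the paper: label the edges of the disjoint cycles consecutively, treat the resulting edge-weight sums at each vertex as vertex labels, invoke Observation \ref{klab} to place the labels $2k+1,\dots,3k$ on the perfect matching so that each matching label avoids the sums at its endpoints, and finish with Lemma \ref{3lab}. Your explicit verification that every sum value occurs at most twice (via the monotonicity of $2a_i+\ell_i$ and the disjointness of the odd-sum ranges) is a welcome addition, since the paper only asserts this multiplicity bound.
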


\begin{proof}
    Let $G$ be a cubic graph whose vertex set could be written as a disjoint union of
vertex sets of cycles in $G$.  Let $C^1 , C^2 , \dots , C^b$ be the $b$ disjoint cycles. Let cycle $C^i$ has $a_i$ vertices, then the disjoint cycles are of the form $C_{a_1}$, $C_{a_2}$, $\dots$, $C_{a_b}$. Clearly, $\sum_{i=1}^{b} a_i = n$.
Now label the edges in $C_{a_1}$ from 1 to $a_1$, edges in $C_{a_2}$ from $a_1 + 1$ to $a_1 + a_2$, edges in $C_{a_3}$ from $a_1 + a_2 + 1$ to $a_1 + a_2 +a_3$, proceeding this way, edges in $C_{a_j}$ from $\sum_{i=1}^{j-1} a_i +1$ to $\sum_{i=1}^{j} a_i$, eventually edges in $C_{a_b}$ from $\sum_{i=1}^{b-1} a_i +1$ to $\sum_{i=1}^{b} a_i = n$.\par
We have to label the remaining $k = \frac{n}{2}$ edges with numbers from $n+1$ to $n+k$. Since cubic graphs with less than or equal to 6 vertices are AR graphs, let us consider only those graphs having more than 6 vertices.\par
Since we have labelled the edges continuously till this point, after having labelled $n$
edges, a number appears as the sum of edge weights in exactly one cycle and at most
twice. Since there are at least 8 distinct vertices and $G$ is satisfying the condition of $Observation$ \ref{klab}, we conclude that there exists a labeling of those $k$  edges that results in an AR-labeling of $G$ from $E(G)$ to $\{1,2, \ldots,3k\}$, hence $G$ is an AR-graph. 
\end{proof}
\begin{coro*}\label{maxdeg}
    A graph $G$ with $\Delta(G) \leq 3$, whose vertex set could be partitioned into vertex sets of cycles of $G$, is an AR-graph.
\end{coro*}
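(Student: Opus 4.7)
The plan is to follow the proof of the preceding theorem essentially verbatim, exploiting that in the weaker setting $\Delta(G) \leq 3$ every vertex of degree less than three is already an AR-vertex by Lemma \ref{1or2}. Let $C^1, \dots, C^b$ be vertex-disjoint cycles of $G$ with $|V(C^i)| = a_i$ whose vertex sets partition $V(G)$, and set $n = \sum_i a_i$. Since every vertex uses two degree-slots on its cycle and $\Delta(G) \le 3$, the edges of $G$ lying outside $C^1, \dots, C^b$ form a matching $M$; write $k' = |M|$, so $m = n + k'$ and the $2k'$ endpoints of $M$ are precisely the degree-three vertices of $G$.

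First, I would label the cycle edges with $\{1, \ldots, n\}$ by the cumulative scheme used in the preceding theorem: edges of $C^j$ receive the consecutive integers $\sum_{i<j} a_i + 1, \ldots, \sum_{i \le j} a_i$. By Lemma \ref{1or2}, the degree-two vertices are immediately AR, and only the endpoints of $M$ need further verification. At such a degree-three vertex $v$ the two cycle-edge labels $x, y$ satisfy $x, y \le n$, while the matching-edge label will be $z \in \{n+1, \ldots, n+k'\}$; since $z > \max(x, y)$, Lemma \ref{3lab} reduces the AR-condition at $v$ to the single inequality $z \neq x + y$.

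It therefore remains to assign $\{n+1, \ldots, n+k'\}$ to the edges of $M$ so that each edge avoids the forbidden sum at either endpoint. As in the preceding theorem, the consecutive cycle labelling forces each induced vertex-sum $s_v := x_v + y_v$ to be attained by at most two vertices. We thus have $k'$ distinct numbers, $2k'$ vertices labelled by $s_v$ with each label repeating at most twice, and a matching of $k'$ independent edges between them, which is exactly the hypothesis of Observation \ref{klab}, provided $k' \ge 4$. That observation then yields the required assignment and hence an AR-labelling $f : E(G) \to \{1, \ldots, m\}$.

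The principal obstacle is the boundary regime $k' \le 3$, where Observation \ref{klab} is not directly applicable. The case $k' = 0$ (a disjoint union of cycles) is dispatched by Lemma \ref{1or2}. For $k' \in \{1, 2, 3\}$ the matching has at most three edges and at most six forbidden label--vertex pairs in total, so a short finite inspection, if necessary after cyclically rotating or reversing the labelling on one of the cycles to shift its sums out of the small window $\{n+1, \ldots, n+k'\}$, produces an admissible labelling. In all cases, $G$ is an AR-graph.
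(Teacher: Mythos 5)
Your treatment of the main case coincides with the paper's: label the cycle edges $1,\dots,n$ cumulatively, note that each vertex sum $x_v+y_v$ occurs at most twice (the sums coming from different cycles lie in disjoint ranges), use Lemma \ref{3lab} to reduce the condition at a degree-three vertex to avoiding the single value $x_v+y_v$, and invoke Observation \ref{klab} to distribute $n+1,\dots,n+k'$ over the matching when $k'\geq 4$. That part is correct and is exactly the paper's argument.

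The gap is the regime $k'\leq 3$ (equivalently $m\leq n+3$), which you dispatch with ``a short finite inspection''. Two concrete problems. First, the mechanism you name does not do what you claim: cyclically rotating or reversing the labelling of a cycle only permutes which vertex of that cycle receives which sum; the multiset of sums on the cycle is unchanged, so rotation cannot ``shift its sums out of the small window $\{n+1,\dots,n+k'\}$''. At best it relocates a bad sum to a vertex that is not a matching endpoint, and when two or three matching endpoints sit on the same short cycle you would have to show this can be done for all of them simultaneously (together with a system-of-distinct-representatives argument for assigning the at most three labels), which you do not. Second, ``finite inspection'' is not available uniformly: for each fixed $k'\in\{1,2,3\}$ there are infinitely many admissible graphs (arbitrary cycle lengths and attachment points), so a uniform construction or Hall-type argument is required, not a case check. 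The paper closes this regime by a different device: it first settles the base cases $C_3$, $C_4$, $K_4-e$ and $K_4$, and for $m=n+1$, $n+2$, $n+3$ it moves the small labels $2$ (respectively $2,3$ and $2,3,4$) onto the non-cycle edges, relabels the displaced cycle edges with $n+1,\dots,n+3$, and patches the single vertex where the large label can clash by giving the matching edge at that vertex the label $2$. To complete your proof you need an explicit argument of this kind for $k'\in\{1,2,3\}$; as written, the proposal is incomplete precisely where the corollary genuinely differs from the preceding theorem.
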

 \begin{proof}
     Since $C_3, C_4$, $K_4 - e$ and $K_4$ are AR-graphs (AR-labeling of $K_4 - e$ can be deduced from Figure \ref{K4}), we need to consider only graphs with $n>4$.
     The labeling technique follows from the theorem. Label all the edges in the cycles using consecutive natural numbers starting from 1. While labeling the remaining edges, if there are at least four of them, the corollary follows from $Observation$ \ref{klab}. Assume $m \leq n+3$. Under the purview of this corollary, $m=n$ implies a cycle which is an AR-graph. Suppose $m = n+1$, replacing the label 2 with $n+1$ results in an AR-labeling irrespective of where the edge label 2 is placed since $n>4$.  Suppose $m = n+2$, replace the labels 2 and 3 with $n+1$ and $n+2$ respectively. The only vertex that raises a concern is the one on which the edge with label $n+2$ is incident along with another edge with label $x<n$. Labeling the possible remaining edge incident on that vertex using label 2 results in an AR-labeling of the graph. Similarly, if $m = n+3$,  replace the labels 2, 3 and 4 with $n+1$, $n+2$ and $n+3$ respectively. The only vertex that raises concern is the one on which the edge having label $n+3$ is incident along with another edge with label $x\leq n$. Labeling the possible remaining edge on that vertex as 2 gives an AR-labeling. Hence the result. 
    \end{proof}

\section{Sierpinski Graphs and Sierpinski Gasket Graphs}
The Sierpi$\Acute{\text{n}}$ski graphs are typically used in complicated frameworks, fractals and recursive assemblages. Different networks associated with these graphs have applications in computer science, physics and chemistry \cite{Muh}. The Sierpi$\Acute{\text{n}}$ski graphs $S(n,3)$ are defined in the following way \cite{San}:
\begin{center}
    $V(S(n,3)) = \{1,2,3\}^n$, 
\end{center}
two different vertices $u= (u_1,u_2,\ldots,u_n)$ and $v = (v_1,v_2,\ldots,v_n)$ being adjacent if and only if there exists an $h \in \{1,2,\ldots,n\}$ such that 
\begin{enumerate}
    \item $u_t = v_t$, for $t=1,2,\ldots,h-1$;
    \item $u_h \neq v_h$; and 
    \item $u_t = v_h$ and $v_t = u_h$ for $t = h+1, h+2, \ldots, n$
\end{enumerate}

All Sierpi$\Acute{\text{n}}$ski graphs have maximum degree 3 and hence by Corollary \ref{maxdeg}, all of them are AR-graphs. Though the corollary proves the existence of an AR-labeling which makes $S(n,3)$ an AR-graph, the actual labeling is not obtained. In the proof of the following theorem, we give an explicit labeling for $S(n,3)$. 
\begin{theorem}
    The Sierpe$\Acute{\text{n}}$ski graph $S(n,3)$ is an AR-graph, for every $ n \in \mathbb{N}$.
\end{theorem}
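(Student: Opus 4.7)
The plan is to construct $f_n$ recursively, exploiting the self-similar decomposition $S(n,3) = S^{(1)} \cup S^{(2)} \cup S^{(3)} \cup \{b_{12}, b_{13}, b_{23}\}$, where each $S^{(i)}$ is the copy of $S(n-1,3)$ obtained by fixing the first coordinate at $i$, and the bridge $b_{ij}$ is the edge $(i,j,j,\ldots,j)\sim(j,i,i,\ldots,i)$. The base case $n=1$ is $K_3$ labelled $\{1,2,3\}$, which is AR by Lemma~\ref{1or2}.

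For $n \geq 2$, set $m_{n-1}=\frac{3(3^{n-1}-1)}{2}$ and assume inductively that $f_{n-1}$ is an AR-labeling of $S(n-1,3)$ using $\{1,\ldots,m_{n-1}\}$. I define $f_n$ by labelling each copy $S^{(i)}$ with $f_{n-1}$ shifted by $(i-1)m_{n-1}$, so its labels lie in $R_i=\{(i-1)m_{n-1}+1,\ldots,im_{n-1}\}$, and by assigning the three bridges the three largest labels $\{3m_{n-1}+1,3m_{n-1}+2,3m_{n-1}+3\}$ in an order to be fixed below. This uses precisely $\{1,\ldots,m_n\}$ with $m_n=3m_{n-1}+3$.

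Verification splits by vertex type. The three global corners $(i,i,\ldots,i)$ retain degree $2$ and are AR by Lemma~\ref{1or2}. The interior vertices of each $S^{(i)}$ keep their three edges inside the copy with labels uniformly shifted by $k=(i-1)m_{n-1}$; the Lemma~\ref{3lab} condition $a+b\neq c$ (for $a<b<c$) transforms to $a+b+k\neq c$, which holds since $c-a-b\leq m_{n-1}-3<m_{n-1}\leq k$ for $i\geq 2$ (and is trivial for $i=1$ since $k=0$). Each non-global corner of $S^{(i)}$ acquires a bridge edge and becomes degree $3$ with labels $\{a,b,c\}$, where $a,b\in R_i$ and $c\in\{3m_{n-1}+1,3m_{n-1}+2,3m_{n-1}+3\}$; since $c>b$ always, $c$ is the maximum and Lemma~\ref{3lab} demands $a+b\neq c$. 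Direct size estimates yield $a+b\leq 2m_{n-1}-1<c$ for $i=1$ and $a+b\geq 4m_{n-1}+3>c$ for $i=3$, giving automatic AR in these two copies.

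The main obstacle is therefore the two bridge-incident corners of $S^{(2)}$, namely $(2,1,\ldots,1)$ (touching $b_{12}$) and $(2,3,\ldots,3)$ (touching $b_{23}$), whose two-label sums $s_1,s_2$ lie in $[2m_{n-1}+3,\,4m_{n-1}-1]$ and may hit the bridge range $\{3m_{n-1}+1,3m_{n-1}+2,3m_{n-1}+3\}$. The three bridge labels must be distributed so that $f_n(b_{12})\neq s_1$ and $f_n(b_{23})\neq s_2$; since the third bridge $b_{13}$ connects two already-verified corners and carries no constraint, a short case analysis finishes the assignment: if $s_1\neq s_2$ both lie in the bridge range, put $s_2$ on $b_{12}$ and $s_1$ on $b_{23}$; if $s_1=s_2$ lies in the range, put that value on $b_{13}$ and distribute the other two freely on $b_{12},b_{23}$; if either $s_i$ lies outside the range, its constraint is vacuous. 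This closes the induction and yields an explicit AR-labeling of $S(n,3)$ using $\{1,\ldots,m_n\}$, proving the theorem.
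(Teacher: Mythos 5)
Your proof is correct, and it shares the paper's skeleton: decompose $S(n,3)$ into three copies of $S(n-1,3)$ joined by three bridge edges, shift the labels of the second and third copies by $m_{n-1}$ and $2m_{n-1}$, give the bridges the three largest labels, and verify everything through the degree-three criterion of Lemma \ref{3lab} (equivalently the three-element restriction of Lemma \ref{kshift}). The difference lies in how the delicate vertices are settled. The paper commits to a fixed bridge assignment (a parity rule in $n$ for which of $3\alpha+2$, $3\alpha+3$ goes on $e_{lr}$ versus $e_{mr}$) and justifies it by carrying structural features of its particular labeling through the iteration --- block corners see consecutive labels, and $|E(S(n,3))|$ alternates in parity so $e_{lr}$ always receives an even label while its endpoint sees two consecutive copy labels; consequently its induction is anchored at the hand-labelled $S(3,3)$. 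You instead isolate exactly where a conflict can occur: only at the two bridge corners of the middle copy, whose within-copy pair sums lie in $[2m_{n-1}+3,\,4m_{n-1}-1]$ and may hit the bridge range, while the first and third copies are safe by the crude bounds $a+b\leq 2m_{n-1}-1$ and $a+b\geq 4m_{n-1}+3$. Since each of the two constrained bridges must merely avoid one value among three available labels, a permutation of the bridge labels always works. This makes your inductive hypothesis just ``an AR-labeling onto $\{1,\dots,m_{n-1}\}$,'' so the induction starts cleanly at $S(1,3)$, needs no auxiliary invariant and no parity bookkeeping, and in fact shows that \emph{any} AR-labeling of $S(n-1,3)$ onto $\{1,\dots,m_{n-1}\}$ extends --- a slightly more robust argument than the paper's explicit iterative scheme, at the cost of not producing one canonical labeling.
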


\begin{proof}
    $S(1,3)$, being a triangle is trivially an AR-graph. The edge labeling of the induced $S(2,3)$ in the upper half of Figure \ref{Sierpenski Graphs} shows that $S(2,3)$ is an AR-graph. From $S(2,3)$, we are giving an iterative method as an explicit labeling technique for Sierpi$\Acute{\text{n}}$ski graph with $n>2$. The labeling of $S(3,3)$ is given as an example in Figure \ref{Sierpenski Graphs}.
    
\begin{figure}[ht]
    \centering
    \includegraphics[scale=0.35]{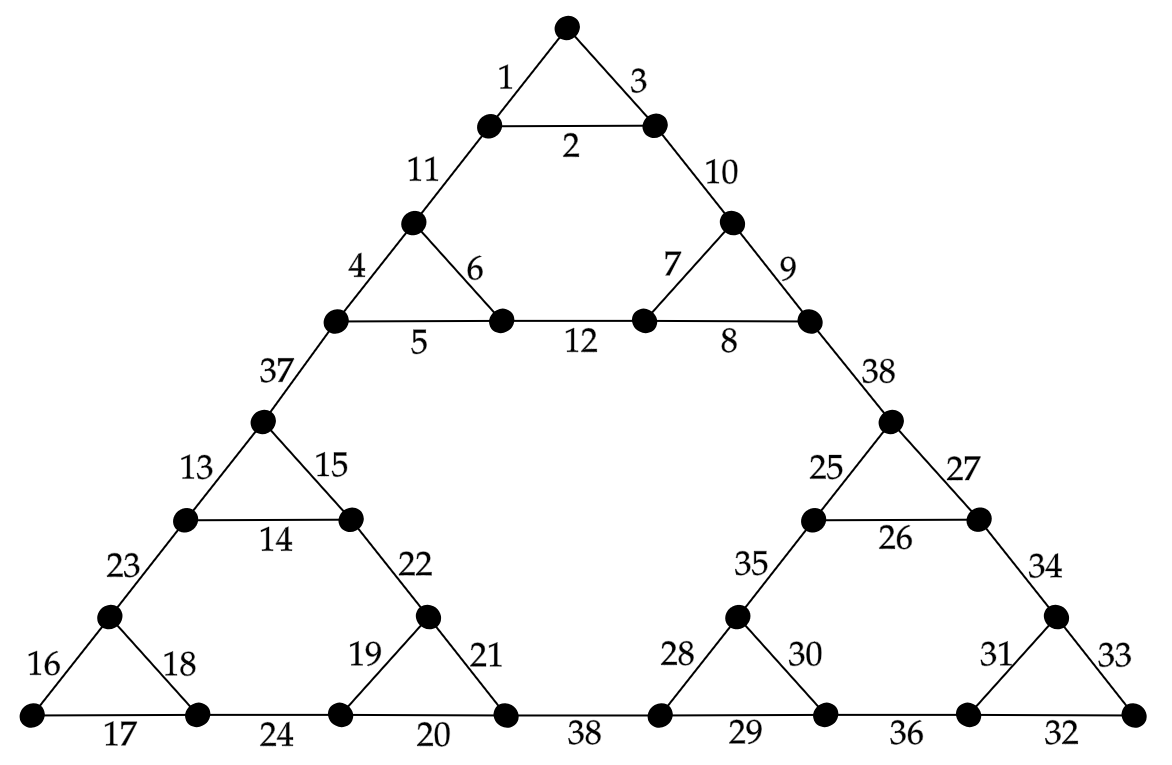}
    \caption{AR-labeling of $S(3,3)$}
    \label{Sierpenski Graphs}
\end{figure}

For $n>3$, the iterative labeling technique for $S(n,3)$ starts using the labels of $S(3,3)$ in Figure \ref{Sierpenski Graphs}. Since $S(n,3)$ is made up of three copies of $S(n-1,3)$ along with three edges which connect them, we call the three copies, M-Block, L-Block and R-Block. We label the edges in the M-Block using the exact labels from $S(n-1,3)$. The edge in L-Block corresponding to the edge labelled $k$ in $S(n-1,3)$ is labelled $\alpha +k$, where $\alpha$ denotes the number of edges in $S(n-1,3)$. Similarly, the edge in R-Block corresponding to the edge labelled $k$ in $S(n-1,3)$ is labelled $2\alpha +k$. There are three more edges left to be labelled. Let them be $e_{ml}, e_{mr}$ and $e_{lr}$ where the suffices denote the blocks on which the edge is incident on. Let $e_{ml}$ be incident on the vertices $v_l^m$ and $v_m^l$, where $v_l^m$ denotes the vertex in L-Block adjacent to $e_{ml}$ and $v_m^l$ represents the vertex in M-Block adjacent to $e_{ml}$. Similarly, let $e_{lr}$ be incident on $v_l^r$ and $v_r^l$, and $e_{mr}$ be incident on $v_r^m$ and $v_m^r$. Label the edge $e_{ml}$ with $3\alpha + 1$. Now, if $n$ is even, $e_{lr}$ is labelled $3\alpha + 3$ and $e_{mr}$ is labelled $3\alpha + 2$. If $n$ is odd, $e_{lr}$ is labelled $3\alpha + 2$ and $e_{mr}$ is labelled $3\alpha + 3$. 

We claim that the above labeling is an AR-labeling. A restriction of Lemma \ref{kshift} to three vertices shows that all vertices other than the vertices adjacent to edges $e_{lr}$, $e_{mr}$ and $e_{ml}$ are AR-vertices. Now consider $v_m^l$ and $v_m^r$, the set of edge labels incident on them is of the form $\{k, k+1, 3\alpha + c\}$ where $c$ is a non-negative integer and $k \in \mathbb{N}, k < \alpha$. By Lemma \ref{3lab}, $v_m^l$ and $v_m^r$ are AR-vertices. For $v_l^m$, the set of edge labels incident is of the form $\{\alpha + 1, \alpha + 2, 3\alpha + 1\}$ and for $v_r^m$, the set of edge labels incident is of the form $\{2\alpha + 1, 2\alpha + 2, 3\alpha + c\}$ where $c \in \{2,3\}$. By Lemma \ref{3lab}, $v_l^m$ and $v_r^m$ are also AR-vertices. Before considering $v_l^r$ and $v_r^l$, the remaining two vertices, we observe that the number of edges in $S(n,3)$ is odd if and only if $n$ is odd. $S(1,3)$ has 3 edges, $S(2,3)$ has $3(3)+3 = 12$ edges, $S(3,3)$ has $3(12)+3 = 39$ edges. $|E(S(n,3))| = 3|E(S(n-1,3))| + 3$, means the number of edges in $S(n,3)$ alternatively becomes even and odd. Using this observation, we can see that the edge $e_{lr}$ is always labelled an even number in our specific labeling mentioned above. Since the labels of edges incident on $v_l^r$ as well as $v_r^l$ are consecutive numbers, Lemma \ref{3lab} ensures that both vertices are AR-vertices. Hence, the result.  
\end{proof}

The Sierpi$\Acute{\text{n}}$ski gasket graphs $S_n, n \geq 1$, can be defined geometrically as the graph whose vertices are the intersection points of the line segments of the finite Sierpi$\Acute{\text{n}}$ski gasket $\sigma_n$ and line segments of the gasket as edges\cite{San}. $S_1$, being a triangle, is trivially an AR-graph. Figure \ref{Sierpenski Gasket Graphs} shows that $S_2$ and $S_3$ are also AR-graphs.\\

\begin{figure}[ht]
    \centering
    \includegraphics[scale=0.5]{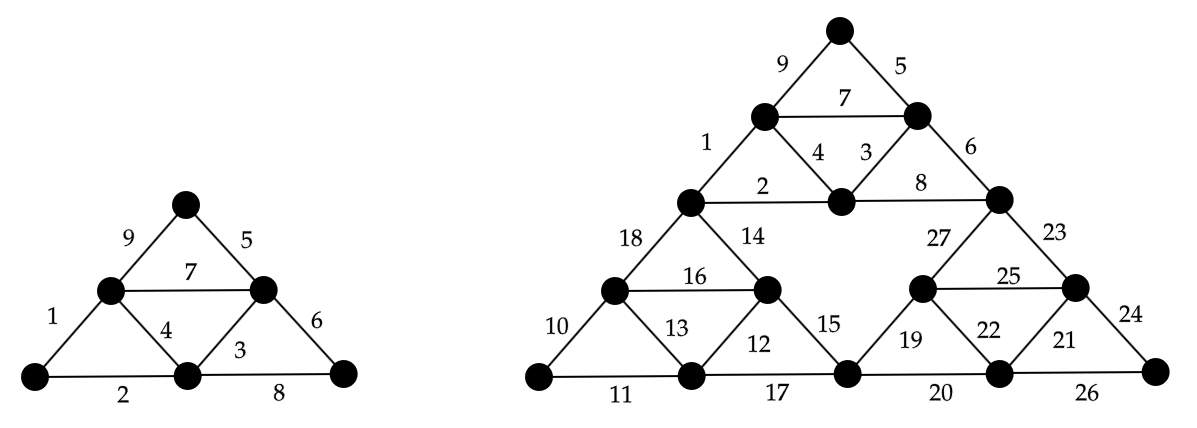}
    \caption{AR-labeling of Sierpi$\Acute{\text{n}}$ski Gasket Graphs $S_2$ and $S_3$}
    \label{Sierpenski Gasket Graphs}
\end{figure}

\begin{theorem}
 The Sierpi$\Acute{\text{n}}$ski Gasket Graph $S_n$ is an AR-graph, for every $ n \in \mathbb{N}$. 
\end{theorem}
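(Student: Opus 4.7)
The plan is to mimic the iterative, block-based induction used for $S(n,3)$ in the previous theorem. Base cases $n=1,2,3$ are already in place: $S_1=K_3$ is AR by Lemma \ref{1or2} and Figure \ref{Sierpenski Gasket Graphs} supplies explicit AR-labelings of $S_2$ and $S_3$. Since $|E(S_n)|=3^n$ (from the recurrence $|E(S_n)|=3|E(S_{n-1})|$, with no new edges introduced at each stage), the target label set is $\{1,\ldots,3^n\}$.

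For the inductive step with $n\geq 4$, given an AR-labeling $f_{n-1}:E(S_{n-1})\to\{1,\ldots,3^{n-1}\}$, I would decompose $S_n$ into three copies of $S_{n-1}$, the T-, L-, and R-blocks, joined by identifying corresponding corner vertices of adjacent blocks. Unlike the construction for $S(n,3)$, no connecting edges are added, but each pairwise identification creates a new degree-$4$ ``gluing'' vertex (three in total). I would label the T-block by $f_{n-1}$, the L-block by $f_{n-1}+3^{n-1}$, and the R-block by $f_{n-1}+2\cdot 3^{n-1}$, so each integer in $\{1,\ldots,3^n\}$ is used exactly once. The three outer corners of $S_n$ retain degree $2$ and are AR by Lemma \ref{1or2}. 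Any degree-$4$ vertex lying entirely inside one of the three blocks carries a label-set that is a uniform shift of its original $S_{n-1}$ label-set by $0$, $3^{n-1}$, or $2\cdot 3^{n-1}$, and since each shift is at least the maximum label $3^{n-1}$ used by $f_{n-1}$, Lemma \ref{kshift} preserves the distinct-subset-sum property at these vertices.

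The hard part is the three gluing vertices. At such a vertex the incident label-set takes the mixed form $\{a,\,b,\,3^{n-1}+c,\,3^{n-1}+d\}$, where $\{a,b\}$ and $\{c,d\}$ are corner-edge label pairs coming from two different copies of $f_{n-1}$, and Lemma \ref{kshift} no longer applies because the two pairs are shifted by different amounts. A direct enumeration of the sixteen subset sums shows that the possibly-colliding pairs reduce to a short checklist, essentially $a+b=3^{n-1}+c$, $a+b=3^{n-1}+d$, $b-a=d-c$, and $a+b+c=d$; most other candidate collisions are forbidden by elementary size considerations once one notes $a,b\leq 3^{n-1}<3^{n-1}+c,\,3^{n-1}+d$. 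My strategy is therefore to strengthen the inductive hypothesis by requiring $f_{n-1}$ to place controlled, small labels on its six corner edges, for example $a+b\leq 3^{n-1}$ together with an arithmetic avoidance $b-a\neq d-c$ across pairs of corners. The first requirement instantly kills the size-type collisions (reducing them via Lemma \ref{bsum} applied to the two shifted labels), while the second clears the remaining $2$-versus-$2$ coincidence; if the base-case labeling does not outright satisfy the avoidance condition, a local swap among small labels within $f_{n-1}$, justified by Lemma \ref{4lab}, can be used to enforce it without disturbing the AR property elsewhere. The final step is to verify that the explicit labelings in Figure \ref{Sierpenski Gasket Graphs} already satisfy the strengthened hypothesis, so that the induction propagates cleanly from $n=3$ onwards.
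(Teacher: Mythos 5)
Your skeleton matches the paper's: the same decomposition of $S_n$ into three blocks labelled by $f_{n-1}$, $f_{n-1}+3^{n-1}$ and $f_{n-1}+2\cdot 3^{n-1}$, Lemma \ref{1or2} for the outer corners, Lemma \ref{kshift} for vertices interior to a block, and a separate treatment of the three degree-$4$ gluing vertices. The gap is precisely at the gluing vertices. Your enumeration of possible collisions in a label set of the mixed form $\{a,b,3^{n-1}+c,3^{n-1}+d\}$ is incomplete: besides $a+b=3^{n-1}+c$, $a+b=3^{n-1}+d$, $b-a=d-c$ and $a+b+c=d$, there are the two-versus-one collisions $a+(3^{n-1}+c)=3^{n-1}+d$ and $b+(3^{n-1}+c)=3^{n-1}+d$, i.e.\ $|d-c|\in\{a,b\}$, which are \emph{not} excluded by any size consideration. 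In fact the complete danger list is $|d-c|\in\{\,|b-a|,\;a,\;b,\;a+b\,\}$ together with $a+b\in\{3^{n-1}+c,\,3^{n-1}+d\}$, as one sees by running Lemma \ref{max10} with $a_4=3^{n-1}+d$ added to $\{a,b,3^{n-1}+c\}$. Your proposed strengthened hypothesis ($a+b\le 3^{n-1}$ and $b-a\ne d-c$) therefore does not rule out all collisions (it also does not address your own listed case $a+b+c=d$, which is $|d-c|=a+b$; Lemma \ref{bsum} only gives that $\{a,b,3^{n-1}+c\}$ is collision-free, not that the fourth label is safe), and the appeal to Lemma \ref{4lab} to "repair" a base labeling by swaps is not justified, since that lemma concerns labeling independent edges, not preserving the AR property under relabeling.

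There is a second, related gap: you never verify that your corner-edge invariant propagates. The corners of $S_n$ inherited from the R-block carry labels exceeding $2\cdot 3^{n-1}$, so a condition like "the two corner labels sum to at most the block size" does not automatically hold at the next level for all corners; one has to track exactly which corner of which block gets identified with which, and what labels its two edges carry. This is exactly what the paper does instead of an abstract invariant: it fixes the explicit labeling of $S_3$ from Figure \ref{Sierpenski Gasket Graphs}, observes that under the recursion the three gluing vertices always receive concrete label sets such as $\{3^{n-2}+1,\,3^{n-2}+2,\,3^{n-1}+5,\,3^{n-1}+9\}$, and checks each of these against Lemma \ref{max10} (the checks amount to inequalities like $1+9\ne 2+5$). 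So the paper trades your abstract strengthened induction for an explicit bookkeeping of the glued corners' labels. To salvage your version you would need to (i) complete the collision list to $|d-c|\notin\{a,b,|b-a|,a+b\}$ plus $a+b\notin\{3^{n-1}+c,3^{n-1}+d\}$, and (ii) prove that the specific corner pairs that actually get glued at every level satisfy these conditions under the shift construction, which in effect forces you to compute the corner labels explicitly, as the paper does.
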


\begin{proof}
    
We label each graph inductively using the labels from the preceding graph. Let us identify the Sierpi$\Acute{\text{n}}$ski Gasket Graph $S_n$ with $n > 2$ as having three blocks of $S_{n-1}$ in which each pair of blocks share one common vertex. We call those blocks, the M-block, L-Block and R-Block (Middle, Left and Right). The vertex common to the blocks are named $v_{lm}$, $v_{mr}$ and $v_{lr}$, the suffices denoting the blocks that share the vertex. 

We label the $3^n$ edges of $S_n$ thus. The M-Block is labelled identically to our labeling of $S_{n-1}$. Every edge gets the same label from $S_{n-1}$. The edges of L-Block are labelled as $3^{n-1} + w(e)$ where $w(e)$ denotes the label of the corresponding edge in $S_{n-1}$. The edges of R-Block are labelled as $ 2 \times 3^{n-1} + w(e)$ where $w(e)$ denotes the label of the corresponding edge in $S_{n-1}$.

Let us consider labeling $S_n$ using the labels we gave for $S_3$. From Lemma \ref{kshift}, we have all the vertices of $S_n$ other than $v_{lm}$, $v_{mr}$ and $v_{lr}$ as AR-vertices. Now consider $v_{lm}$, the edges incident are labelled $3^{n-2} + 1, 3^{n-2} + 2, 3^{n-1} + 5$ and $3^{n-1} + 9$. Since $n \geq 4$ and $1+9 \neq 2+5$, by Lemma \ref{max10}, $v_{lm}$ is an AR-vertex. The edges incident on $v_{mr}$ are labelled $3^{n-2} + 6, 3^{n-2} + 8, 2 \times 3^{n-1} + 5$ and $2 \times 3^{n-1} + 9$.  Since $n \geq 4$ and $1+9 \neq 2+6$, by Lemma \ref{max10}, $v_{mr}$ is an AR-vertex. For $v_{lm}$, the edges incident are labelled $3^{n-1} + 6, 3^{n-1} + 8, 2 \times 3^{n-1} + 1$ and $2 \times 3^{n-1} + 2$. Since $6+8 > 2$ and $6 + 2 \neq 8 + 1$, by Lemma \ref{max10}, $v_{lm}$ is also an AR-vertex. Hence the proof. 
\end{proof}

\section{Perfect Binary and Ternary Trees}
A binary tree is a tree structure in which each node has at most two children, referred to as the left child and the right child. A perfect binary tree $T_{r,2}$ is a binary tree in which all interior nodes have two children and all leaves have the same depth $r$ (the depth of a node $v$ is defined as the number of edges from the root node to $v$) \cite{Yum}. Similarly, A perfect $t-$ary tree $T_{r,t}$ is a $t-ary$ tree in which all interior nodes have $t$ children and all leaves have the same depth $r$ \cite{Yum}. The perfect $3-$ary tree $T_{r,3}$ is also called perfect ternary tree.
\par For $t\geq2$ and $r\geq1$, a glued $t-$ary tree $GT(r,t)$ is obtained from two copies of a perfect $t-$ary tree by pairwise identification of their leaves. The vertices obtained by identification are called quasi-leaves of $GT(r,t)$. The glued binary trees $GT(r,2)$ are used in quantum computing and can be used to solve problems exponentially faster than classical algorithms \cite{Chi}\cite{Zi}. The concept of glued $t-$ary trees can be generalized by gluing $n$ perfect $t-$ary trees, instead of two \cite{Dha}. More precisely, the $n^{th}$ generalized glued $t-$ary tree $GT_{r,t}^{(n)}$ of depth $r$ is obtained from $n$ copies of $T_{r,t}$ by identifying their leaves.  
\begin{theorem}
    The perfect binary trees $T_{r,2}$ are AR-graphs, for every $r \in \mathbb{N}$.
\end{theorem}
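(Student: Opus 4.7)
The plan is to proceed by induction on the depth $r$, exploiting the recursive decomposition of $T_{r+1,2}$ as a new root joined by two edges to the roots of two disjoint copies of $T_{r,2}$. Since $T_{r,2}$ has $2^{r+1}-2$ edges, the target label set for $T_{r+1,2}$ is $\{1,2,\dots,2^{r+2}-2\}$, which has exactly $2^{r+2}-2$ elements, as required.

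For the base case $r=1$, $T_{1,2}$ is the star $K_{1,2}$; labeling its two edges with $\{1,2\}$ gives an AR-labeling by Lemma \ref{1or2}, since every vertex has degree at most two. For the inductive step, suppose $f\colon E(T_{r,2})\to\{1,\dots,2^{r+1}-2\}$ is an AR-labeling. Label the left copy of $T_{r,2}$ by $f$ directly; label the right copy by $f$ shifted by $k:=2^{r+1}-2$, so its labels fill $\{2^{r+1}-1,\dots,2^{r+2}-4\}$; and assign the two remaining labels $2^{r+2}-3$ and $2^{r+2}-2$, in either order, to the two edges emanating from the new root.

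Verification of the AR property splits into four cases. The new root has degree $2$, so it is AR by Lemma \ref{1or2}. Internal vertices of the left subtree keep their original labels and remain AR by the inductive hypothesis. At an internal vertex of the right subtree with original sorted labels $a<b<c$, the new labels are $a+k<b+k<c+k$; Lemma \ref{3lab} gives AR-ness provided $a+b+k\neq c$, and this follows from $c\le 2^{r+1}-2=k$, which forces $a+b+k\ge k+3>c$. Finally, each of the two old subtree roots, whose degree jumps from $2$ to $3$ upon gluing, is incident to its two original subtree-edges---call their labels $x,y$---together with one of the two new top edges. For the left old root, $x+y\le(2^{r+1}-3)+(2^{r+1}-2)=2^{r+2}-5<2^{r+2}-3\le\text{(new label)}$, so the new label is the strict maximum and $x+y$ falls short of it; for the right old root, the shifted labels satisfy $(x+k)+(y+k)\ge 3+2k=2^{r+2}-1>2^{r+2}-2\ge\text{(new label)}$, so the new label is again the maximum and is now strictly dominated by the sum of the other two. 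In both cases Lemma \ref{3lab} confirms the AR property.

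The main difficulty is handling the two old subtree roots, whose degree changes from $2$ (where AR-ness is automatic) to $3$ (where it is not), while simultaneously preserving the AR property inside the shifted right subtree and using exactly the labels $\{1,\dots,2^{r+2}-2\}$. The key insight is that the shift $k=2^{r+1}-2$ is tuned so that (i) the shifted and unshifted label ranges are disjoint, (ii) $k\ge c$ for every degree-$3$ triple in $T_{r,2}$, which secures the right-subtree shift argument, and (iii) the remaining two top labels are forced into precisely the window where one comparison closes by ``sum $<$ largest'' and the other by ``sum $>$ largest.'' Because these three properties all hinge on the same numerical choice of $k$, no further combinatorial case analysis is needed and the induction closes cleanly.
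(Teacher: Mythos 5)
Your proposal is correct, and I checked the arithmetic in each of your four cases: the new root is AR by Lemma \ref{1or2}; unshifted internal vertices of the left copy inherit AR-ness; for a shifted degree-$3$ vertex with original labels $a<b<c\le k=2^{r+1}-2$ the relevant condition $(a+k)+(b+k)\neq c+k$ holds since $a+b+k\ge k+3>c$; and at the two old roots the new top labels $2k+1,2k+2$ are the maxima while the other two incident labels sum to at most $2k-1$ (left) or at least $2k+3$ (right), so Lemma \ref{3lab} applies in both directions. However, your route is genuinely different from the paper's. The paper does not induct on the recursive structure: it gives one explicit global labeling, numbering the pendant edges $1$ to $2^r$ and then labeling successive levels with consecutive integers in an alternating left/right sweep, and then verifies every degree-$3$ vertex via Lemma \ref{3lab}; this generic labeling actually fails at one support vertex exactly when $r=4a+1$, forcing a separate patch in which two edge labels are swapped. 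Your shift-by-$m$ construction (left copy unchanged, right copy shifted by $k$, two largest labels on the root edges) sidesteps that exceptional case entirely and needs only Lemmas \ref{1or2} and \ref{3lab}; it is in the same spirit as the translation idea the paper itself uses elsewhere (Lemma \ref{kshift} and the glued-tree and Sierpi\'nski constructions), and notably your right-copy argument does not even need the inductive hypothesis, only the bound $c\le k$. What the paper's approach buys is a concrete, non-recursive description of the labels on every edge; what yours buys is a shorter, uniform, case-free induction.
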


\begin{proof}
    Consider an arbitrary perfect binary tree with depth $r$. Label the pendant edges from one end, say from the leftmost leaf, continuously using numbers 1 to $2^r$. After that, label the remaining edges incident on the supporting vertices(edges in the immediate next level) continuously from $2^r + 1$ to $2^r + 2^{r-1}$ starting from the rightmost edge incident on the supporting vertex. Once all the edges in this level are labelled, continue labeling the next level of edges, starting from the leftmost edge. The process is continued till all the edges are labelled, with the first edge to be labelled after each level is alternating between the leftmost and rightmost ones. It is like labeling the edges from 1 to $m$ in a continuous fashion where each new higher level of edges is labelled starting from the edge which is adjacent to the immediate previous pair of edges labelled. This is an AR-labeling of a perfect binary tree when the depth $r \neq 4a+1, a\in \mathbb{N}$. An illustration of the labeling technique is given for $T_{3,2}$ in Figure \ref{Binary Tree}.

\begin{figure}[ht]
    \centering
    \includegraphics[scale = 0.48]{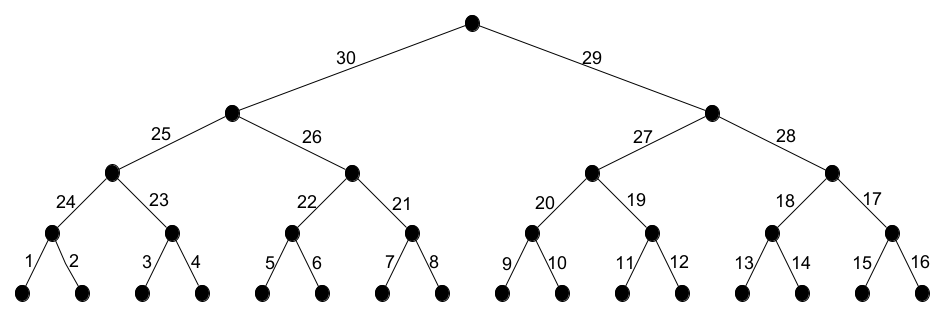}
    \caption{AR-labeling of $T_{3,2}$}
    \label{Binary Tree}
\end{figure}

    The origin vertex with degree 2, as well as the leaves, are trivially AR-vertices by Lemma \ref{1or2}. All the other vertices have the set of edge labels incident on them to be $\{k, k+1, \alpha\}$ for some odd $k \in \mathbb{N}$  and $\alpha \in \mathbb{N}$. For an arbitrary non-support vertex $u$ of degree three, we can write $k = \beta + k'$ where $\beta = 2^r + 2^{r-1} \ldots + 2^j$ for some $j \in \mathbb{N}, j < r-1$ and $k' < 2^{j-1}$, Now $\alpha$ being an edge label in the immediate higher level of edges labelled $k$ and $k+1$, it will be bounded by the highest edge label assigned in that level, that is $\alpha \leq \beta + 2^{j-1} + 2^{j-2}$. But then $2k + 1 > \alpha$. Hence by Lemma \ref{3lab}, $u$ is an AR-vertex.
    
The only vertices left are the support vertices of the graph. Let $v$ be an arbitrary support vertex of the binary tree, the set of edges labels incident on $v$ are $\{k, k+1, \alpha\}$ for some odd $k \in \mathbb{N}$  and $\alpha \in \mathbb{N}$ where $\alpha = 2^r + 2^{r-1} - \frac{k+1}{2}$ + 1. By Lemma \ref{3lab}, $v$ is an AR-vertex if $2k + 1 \neq \alpha$. Assuming the equality and simplifying would lead us to the equality $2^r + 2^{r+1} = 5k + 1$. This is true only when $r = 4a+1, a \in \mathbb{N}$. So when $r$ is not of the form $4a+1, a \in \mathbb{N}$, the above labeling would be an AR-labeling of the perfect binary tree with depth $r$. 
    
    Now let us consider the number of levels $r$ to be of the form $4a+1, a \in \mathbb{N}$. For example, if $r = 5$, we have $k= 19$ such that the above labeling would result in a supporting vertex with incident edge labels \{19, 20, 39\} which is not an AR-vertex by Lemma \ref{3lab}. Similarly, when $r = 9$, the labeling would give rise to a supporting vertex having incident edge labels \{307, 308, 615\}, which by the same Lemma is not an AR-vertex. So, for binary trees with number of levels $r$ = $4a+1, a \in \mathbb{N}$, we swap the edge labels of supporting vertices $\{v_1, v_2\}$, the former adjacent to edge labels $k$ and $k+1$, the latter adjacent to edge labels $k-1$ and $k-2$ where $k$ is such that $5k = 2^r + 2^{r+1} - 1$. Since the third edge label incident on $v_2$ is $2k + 2$ in this case, swapping the edge labels of non-pendant edges would result in $v_1$ and $v_2$ being $AR$ by Lemma \ref{3lab}
    since the sets of edge labels incident on them will be respectively $\{k, k+1, 2k+2\}$ and $\{k-1, k-2, 2k+1\}$. Hence perfect binary trees with depth $r = 4a+1, a \in \mathbb{N}$ are also AR-graphs.  
\end{proof}

\begin{theorem}
The generalized glued binary trees $GT_{r,2}^{(n)}$ are AR-graphs, for $n\leq4$ .
\end{theorem}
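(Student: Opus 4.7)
My plan is to assemble an AR-labeling of $GT_{r,2}^{(n)}$ out of $n$ shifted copies of the AR-labeling $g:E(T_{r,2})\to\{1,\ldots,m\}$ constructed in the previous theorem, where $m=2^{r+1}-2$. Copy $i$ receives the labeling $g+(i-1)m$, so the combined labeling has range $\{1,\ldots,nm\}$, matching $|E(GT_{r,2}^{(n)})|$. Each root of a copy has degree $2$ and is AR by Lemma \ref{1or2}, and each internal degree-$3$ vertex in copy $i$ carries the shifted AR-triple $\{x+(i-1)m,\,y+(i-1)m,\,z+(i-1)m\}$; because $z\le m\le x+y+(i-1)m$, Lemma \ref{3lab} keeps it AR. The whole analysis therefore reduces to the quasi-leaves, which have degree exactly $n$.

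The cases $n\le 3$ are almost immediate. The case $n=1$ is the previous theorem; for $n=2$ every quasi-leaf has degree $2$ and Lemma \ref{1or2} applies. For $n=3$ each quasi-leaf carries the arithmetic progression $\{\ell,\ell+m,\ell+2m\}$ with $\ell\in\{1,\ldots,2^r\}$, and since $\ell\le 2^r<m$ for $r\ge 2$, Lemma \ref{ap} gives AR. The small instance $r=1$ of $GT_{1,2}^{(3)}$ is a six-edge graph that I handle by the explicit labeling $\{1,2,4\}$ and $\{3,5,6\}$ at the two quasi-leaves, realised by the copy-decomposition $\{1,3\},\{2,5\},\{4,6\}$.

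The real work is $n=4$. With identical base labelings in every copy the labels at each quasi-leaf form the arithmetic progression $\{\ell,\ell+m,\ell+2m,\ell+3m\}$, for which $\ell+(\ell+3m)=(\ell+m)+(\ell+2m)$ destroys distinct subset sums. My proposed fix is to keep the standard labeling in copies $1,2,3$ and replace copy $4$ by its left-right mirror, which is again an AR-labeling of $T_{r,2}$ by the mirror symmetry of the tree, so that the pendant edge at the $j$-th leaf of copy $4$ carries $3m+2^r-j+1$ instead of $3m+j$. At each quasi-leaf the four labels $\{j,\,m+j,\,2m+j,\,3m+2^r-j+1\}$ then lie in pairwise disjoint length-$m$ bands, and Lemma \ref{max10} collapses distinct-subset-sum verification to a short list of small-integer identities; the decisive balanced-pair condition is $2^r+1\ne 2j$, which holds on parity grounds.

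The main obstacle I anticipate is the residual collision $B+C=D$, which demands $3j=2^r+1$ and therefore appears at the single leaf $j=(2^r+1)/3$ whenever $r$ is odd. My plan is to remove each such exception by a local sibling-swap in copy $4$: exchange the pendant labels of the two sibling leaves $j$ and $j+1$ (which is well-defined because the bad $j$ is always odd, so $(j,j+1)$ is indeed a sibling pair). The support vertex of this sibling pair sees the same label multiset before and after the swap, so AR inside copy $4$ is preserved, while the label $D$ at the offending quasi-leaf shifts by one and the collision disappears. The small depths $r\in\{1,2\}$ are disposed of by writing the labeling down explicitly. The swap-and-verify step is modelled on the $r\equiv 1\pmod 4$ correction used in the preceding perfect-binary-tree theorem, so I expect it to transfer with only minor bookkeeping.
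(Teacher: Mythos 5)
Your proposal is correct, and while it follows the paper's overall skeleton (label copy $i$ by the perfect-binary-tree AR-labeling shifted by $(i-1)m$, settle the roots and internal degree-$3$ vertices via Lemmas \ref{1or2}, \ref{3lab} and \ref{kshift}, handle $n\le 3$ quasi-leaves by Lemmas \ref{1or2} and \ref{ap}, and concentrate all difficulty in the $n=4$ quasi-leaves), it resolves the $n=4$ obstruction by a genuinely different device. The paper keeps copy $4$ aligned with the other copies and breaks the arithmetic-progression collision $k+(3\mu+k)=(\mu+k)+(2\mu+k)$ by swapping the two pendant labels within each sibling pair of copy $4$ (so quasi-leaves see $\{k,\mu+k,2\mu+k,3\mu+k\pm 1\}$), together with a cyclic rotation of the four smallest pendant labels, verifying AR-ness through the parity-based statements (Lemma \ref{newodd}, Observation \ref{3+even}, Lemmas \ref{ap} and \ref{bsum}) and patching $r=2$ separately; you instead reverse copy $4$ left-to-right, so every quasi-leaf carries $\{j,\,m+j,\,2m+j,\,3m+2^r+1-j\}$ with $m=2^{r+1}-2$, and you run the entire verification through the ten forbidden values of Lemma \ref{max10}. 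A short computation confirms your claims: the balanced-pair identity needs $2^r+1=2j$ (impossible by parity), and the only surviving identity is $3m+2^r+1-j=(m+j)+(2m+j)$, i.e.\ $3j=2^r+1$, which occurs for odd $r$ at the single odd leaf $j=(2^r+1)/3$; your sibling swap removes it without changing the label set at copy $4$'s support vertex, and the postponed bookkeeping does close (after the swap the analogous identities at leaf $j$ force $2^r=3j$ or $j=2^{r-1}$, and at leaf $j+1$ they force $2^r-1=3j$ or again $j=2^{r-1}$, all impossible). Your route buys one uniform lemma-driven check in place of the paper's case-by-case parity arguments. The only loose ends on your side are the promised but unwritten explicit labelings for $r\in\{1,2\}$ when $n=4$: for $GT_{1,2}^{(4)}=K_{2,4}$ one may take $\{1,2,4,8\}$ and $\{3,5,6,7\}$ at the two quasi-leaves (the paper does this by a figure), and in fact your general argument already covers $r=2$ since none of the ten conditions can hold there, so that deferral is harmless and partly unnecessary.
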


\begin{proof}
The glued binary tree $GT(1,2)$ corresponds to cycle $C_4$ which is trivially $AR$. $GT_{1,2}^{(3)}$ and $GT_{1,2}^{(4)}$ denote respectively complete bipartite graphs $K_{2,3}$ and $K_{2,4}$ which are shown to be AR-graphs in Figure \ref{K23 and K24}. So we only need to consider generalized glued binary trees with depth at least two from now on.
    Let us consider an arbitrary glued binary tree $GT(r,2)$ with $r>1$. We label the first copy of the binary tree from 1 to $\mu$ as in $Theorem~ 5$, where $\mu$ denotes the total number of edges in one copy of the binary tree. Label the edges of the second copy of the binary tree using edge labels $\mu + 1$ to $2\mu$ in the following way. Corresponding to the edge labelled $k$ in the first copy, label the edge in the second copy as $\mu + k$. This will be an AR-labeling of $GT(r,2)$ with $r>1$.
    \begin{figure}[ht]
        \centering
        \includegraphics[scale=0.5]{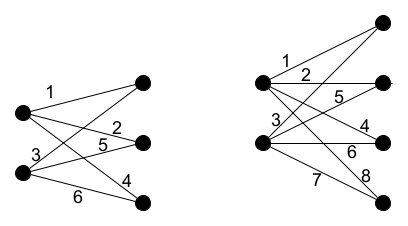}
        \caption{AR-labeling of $K_{2,3}$ and $K_{2,4}$}
        \label{K23 and K24}
    \end{figure}
   
    The quasi leaves and two origin vertices having degree 2 are trivially AR-vertices by Lemma \ref{1or2}. The vertices in the first copy of the binary tree are all AR-vertices and since the set of edge labels incident on an arbitrary vertex $v$ of degree three in the second copy of the binary tree is $\{\mu + a_1, \mu + a_2, \mu + a_3\}$ in which $\{a_1, a_2, a_3\}$ is the set of edge labels incident on an AR-vertex in the first copy, a restriction of Lemma \ref{kshift} to three vertices would then make $v$ an AR-vertex. Hence all glued binary trees $GT(r,2)$ are AR-graphs. Figure \ref{Glued Binary Trees} demonstrates the labeling with the example of $GT(3,2)$. 
     \begin{figure}[ht]
        \centering
        \includegraphics[scale=0.65]{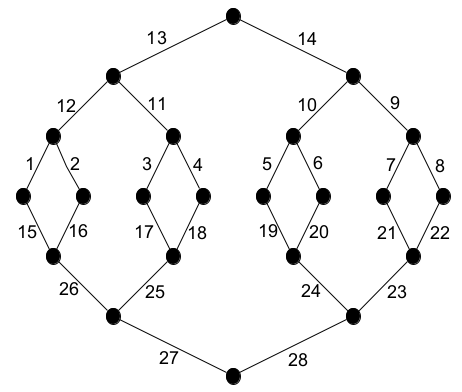}
        \caption{AR-labeling of $GT(3,2)$}
        \label{Glued Binary Trees}
    \end{figure}
    
    Moving onto generalized glued binary trees with three copies $GT_{r,2}^{(3)}$, the labeling of the first two copies follows from $GT(r,2)$ and in the third copy, corresponding to edges labelled $k$, the edges in the third copy are labelled $2\mu + k$. Vertices except the quasi leaves in the third copy of the binary tree are AR-vertices from the same argument we used while attaching the second copy. Hence, by Lemma \ref{1or2} and a restriction of Lemma \ref{kshift} to three vertices, all the vertices apart from the quasi leaves are AR-vertices. For all quasi leaves, the edge labels incident on them are $\{k, \mu + k, 2\mu + k\}$. These quasi leaves are hence AR-vertices by $Lemma~ 3$ since $k < \mu $. The labeling is an AR-labeling of $GT_{r,2}^{(3)}$.
    
    While considering the generalized glued binary tree with four copies $GT_{r,2}^{(4)}$, we label the first three copies of the binary tree except for the edges incident on the quasi leaves using the labels used for $GT_{r,2}^{(3)}$. Corresponding to edges labelled $k$, the edges other than those incident on the quasi leaves in the fourth copy, are labelled $3\mu + k$. All but the quasi leaves and support vertices of the fourth copy of the binary tree will be AR-vertices by Lemma \ref{1or2} and the restriction of Lemma \ref{kshift} to three vertices. We rotate cyclically the edge labels in the fourth copy of the binary tree corresponding to edges labelled 1, 4, 3 and 2 in the first copy of the binary tree. Now for all remaining edge labels from the supporting vertices of the fourth copy of the binary tree, we swap the corresponding labels of the pendant edges from the same supporting vertex in the fourth copy. That is, corresponding to the edge labelled $k$, $k$ being odd $k\neq 1, 3$, the edge in the fourth copy will be labelled $3\mu + k + 1$.  The edges corresponding to the edges labelled $l$ in the first copy, $l$ being even, $l\neq 2, 4$, are labelled $3\mu + l - 1$ in the fourth copy.
    
    Now the set of edge labels of quasi leaves can be classified into two sets and four remaining vertices. Vertices with set $A$ of edge labels $\{k, \mu + k, 2\mu + k, 3\mu + k + 1\}$, set $B$ of edge labels $\{l, \mu + l, 2\mu + l, 3\mu + l - 1\}$ with $k$ being odd and $l$ even. The four quasi leaves will have edge labels $\{1, \mu + 1, 2\mu + 1, 3\mu + 4\}$, $\{4, \mu + 4, 2\mu + 4, 3\mu + 3\}$, $\{3, \mu + 3, 2\mu + 3, 3\mu + 2\}$ and $\{2, \mu + 2, 2\mu + 2, 3\mu + 1\}$. Also, the two supporting vertices in the first copy of the binary tree will have sets of labels of incident edges to be $\{1,4,\gamma\}$ and $\{2,3,\gamma + 1\}$. Since $k \neq 1$, vertices in $A$ will be AR-vertices by $Observation$ \ref{3+even}, Vertices in $B$ will be AR-vertices by Lemma \ref{newodd}. The four quasi leaves are AR-vertices by $Observation$ \ref{3+even} and Lemmas \ref{ap}, \ref{newodd} and \ref{bsum}. The two supporting vertices will be AR-vertices if $r>2$ by Lemma \ref{3lab}, since $r>2$ implies $\gamma > 8$. Hence, the labeling is an AR-labeling of $GT_{r,2}^{(4)}$ with $r>2$. When $r=2$, interchange the labels $5$, which is $\gamma$ and $11$ which is $\mu + \gamma$. Since $11$ is incident on a degree two vertex and a degree three vertex having consecutive numbers, say $x$ and $x+1$ as labels of two incident edges with $x>6$, by Lemma \ref{1or2} and Lemma \ref{3lab}, swapping edge labels between $5$ and $11$ would result in an AR-labeling of $GT_{2,2}^{(4)}$. Hence the result.
    \end{proof}
\begin{theorem}
The perfect ternary trees $T_{r,3}$ are AR-graphs, for every $r \in \mathbb{N}$.
\end{theorem}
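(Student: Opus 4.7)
The plan is to build an AR-labeling of $T_{r,3}$ by induction on $r$, with $r=2$ as the base case and the inductive step modeled on the shift-and-attach construction used for the generalized glued binary trees. Three shifted copies of an AR-labeling of $T_{r-1,3}$ are placed under a new root, so Lemma~\ref{kshift} automatically handles every old degree-$4$ internal vertex.

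For the base case, $T_{2,3}$ has $12$ edges; I would exhibit an AR-labeling with the three root edges carrying the labels $\{1,2,4\}$ and with the remaining labels $\{3,5,6,\dots,12\}$ distributed among the nine pendant edges so that each of the three level-$1$ degree-$4$ vertices sees an AR four-element label set, verified by writing out the sixteen subset sums at each such vertex. The choice $\{1,2,4\}$ at the root is what enables the inductive step.

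For the inductive step, assume $T_{r-1,3}$ has an AR-labeling using $\{1,\dots,\mu\}$, $\mu=(3^r-3)/2$, whose old root's three edges carry labels either $\{1,2,4\}$ (when $r-1=2$) or the three consecutive integers $\{\mu-2,\mu-1,\mu\}$ (when $r-1\geq 3$). View $T_{r,3}$ as a new root $\rho$ joined by edges $\rho\rho_j$ to three copies of $T_{r-1,3}$ rooted at $\rho_j$ ($j=1,2,3$). Label the $j$-th copy by shifting every inductive label by $(j-1)\mu$ and label $\rho\rho_j$ by $3\mu+j$. This is a bijection with $\{1,\dots,m\}$, $m=3\mu+3=(3^{r+1}-3)/2$, and the new root ends up with the three consecutive labels $\{3\mu+1,3\mu+2,3\mu+3\}$, preserving the structural hypothesis. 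Leaves are AR by Lemma~\ref{1or2}; the root $\rho$ is AR by Lemma~\ref{3lab} since $(3\mu+1)+(3\mu+2)\neq 3\mu+3$; every old degree-$4$ vertex (other than a sub-root $\rho_j$) had an AR label set in $T_{r-1,3}$ and remains AR after the uniform shift by Lemma~\ref{kshift}, since the shift is at least the largest original label; and $\rho_1$ is AR by Lemma~\ref{bsum} because $3\mu+1$ exceeds the sum of the three old root labels.

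The main obstacle is the direct verification of AR at $\rho_2$ and $\rho_3$, whose four labels are $\{\mu+a,\mu+b,\mu+c,3\mu+2\}$ and $\{2\mu+a,2\mu+b,2\mu+c,3\mu+3\}$ respectively, where the new edge labels do not dominate the sum of the three shifted children so Lemma~\ref{bsum} is unavailable. For each of these vertices I would expand the sixteen subset sums and group them by their coefficient of $\mu$; within each coefficient class the constants are distinct by inspection, while a cross-class coincidence would force $\mu$ to equal one of finitely many small integers, all excluded once $\mu\geq 12$. The remaining case $\mu=12$ with the $\{1,2,4\}$ root structure (the first inductive step, from $T_{2,3}$ to $T_{3,3}$) is handled by a direct check that $\{13,14,16,38\}$ and $\{25,26,28,39\}$ both have sixteen distinct subset sums.
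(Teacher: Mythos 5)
Your proposal is correct, but it takes a genuinely different route from the paper. The paper gives a single explicit, non-recursive labeling: the edges are labelled level by level with consecutive integers (each new level starting from the leftmost edge), the few exceptional vertices near the top are checked by Lemmas \ref{3odd}, \ref{bsum} and \ref{max10}, and every remaining vertex has label set of the form $\{a,k,k+1,k+2\}$ with $2<a<k-1$, so Observation \ref{3ap+new} finishes all of them at once. You instead exploit the recursive structure of $T_{r,3}$ as a new root joined to three copies of $T_{r-1,3}$, shift the copies by $0,\mu,2\mu$ and give the root the top three labels --- exactly the shift-and-attach strategy the paper reserves for the Sierpi\'nski graphs and the glued trees --- so Lemma \ref{kshift} handles all old degree-$4$ vertices, Lemma \ref{3lab} the new root, Lemma \ref{bsum} the first sub-root, and only $\rho_2,\rho_3$ need the direct coefficient-of-$\mu$ analysis, which indeed rules out collisions once $\mu\geq 12$ (your checks of $\{13,14,16,38\}$ and $\{25,26,28,39\}$ for the step $T_{2,3}\to T_{3,3}$ are correct, and in the consecutive case $\mu\geq 39$). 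Your base case does exist as claimed: attach the pendant triples $\{3,5,12\}$, $\{6,7,10\}$, $\{8,9,11\}$ to the root edges labelled $1,2,4$ respectively, and all four internal vertices of $T_{2,3}$ have distinct subset sums. What the paper's approach buys is a completely explicit formula-free-of-recursion labeling of every edge; what yours buys is a modular argument whose verification burden is a bounded number of vertices per recursion step, independent of $r$, at the cost of carrying the structural invariant on the root labels and exhibiting the depth-$2$ base case. One caveat common to both arguments: $T_{1,3}=K_{1,3}$ is not an AR-graph (as the paper's concluding remarks themselves observe), so the theorem should be read as asserting the result for $r\geq 2$; your induction, like the paper's labeling, starts only at $r=2$.
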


\begin{proof}
    Figure \ref{Ternary Trees} shows that perfect ternary tree with $r=3$ is an AR-graph. One can easily deduce from Figure \ref{Ternary Trees} that $T_{r,2}$ is also an AR-graph. This label could be extended to arbitrary $r$ levels by continuing the labeling with consecutive natural numbers. After each level, labeling of the next lower level commences from the leftmost edge of the level. There are four vertices say $v_1, v_2, v_3, v_4$ in the first three levels with sets of edge labels incident on them being respectively $\{1, 3, 5\}$, $\{1, 2, 4, 12\}$, $\{2, 13, 14, 17\}$ and $\{4, 15, 16, 18\}$. Now $v_1$ is an AR-vertex by Lemma \ref{3odd}, $v_2$ is an AR-vertex by Lemma \ref{bsum} and $v_3$ and $v_4$ can be verified to be AR-vertices by Lemma \ref{max10}. All the remaining vertices in the ternary tree have sets of edge labels of the form $\{a, k, k+1, k+2\}$ for some $k \in \mathbb{N}, k>a$. Since $2 < a < k-1$, by Observation \ref{3ap+new}, all those vertices are AR-vertices. Hence the result. 
\end{proof}
    \begin{figure}[ht]
        \includegraphics[scale=0.43]{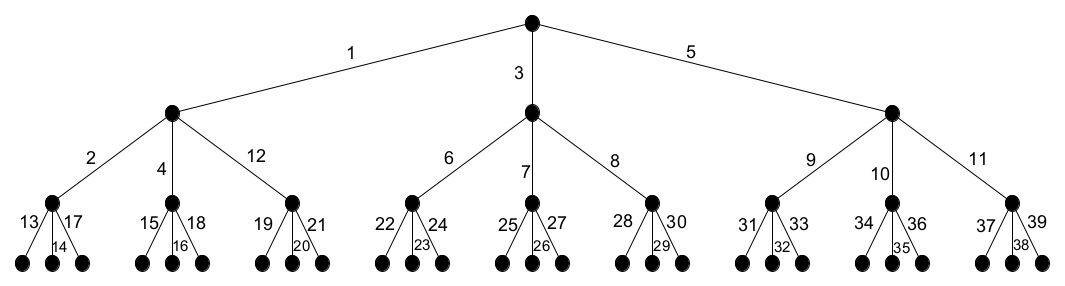}
        \caption{AR-labeling of $T_{3,3}$}
        \label{Ternary Trees}
    \end{figure}

\begin{theorem}
  The generalized glued ternary trees $GT_{r,3}^{(n)}$ are AR-graphs, for $n\leq4$.
\end{theorem}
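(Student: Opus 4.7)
The plan is to mirror the proof of Theorem 5 for generalized glued binary trees, taking the AR-labeling of $T_{r,3}$ from the preceding theorem as the base building block and gluing shifted copies. Let $\mu := |E(T_{r,3})|$. In the $j$-th copy of $T_{r,3}$, every edge is relabelled by adding $(j-1)\mu$ to the label of the corresponding edge in the first copy, except that for $n=4$ a controlled permutation of the fourth-copy labels is needed to break an unavoidable four-term AP collision at the quasi-leaves.

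\textbf{Cases $n=2,3$.} Quasi-leaves of $GT(r,3)$ have degree $2$ and are trivially AR by Lemma \ref{1or2}. Each interior vertex of the second copy has edge-label set $\{\mu+a_1,\ldots,\mu+a_d\}$ obtained by shifting an AR set by $\mu > \max_i a_i$; since the interior degrees in $T_{r,3}$ are $3$ (the root) or $4$ (all other internal vertices), Lemma \ref{kshift} together with its obvious three-element restriction gives AR. Adjoining a third copy with shift $2\mu$ leaves each quasi-leaf incident to $\{k,\mu+k,2\mu+k\}$, an arithmetic progression with common difference $\mu$ and least element $k<\mu$, hence AR by Lemma \ref{ap}; interior vertices of the third copy are handled exactly as for the second.

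\textbf{Case $n=4$.} The naive shift by $3\mu$ forces each quasi-leaf to be incident to $\{k,\mu+k,2\mu+k,3\mu+k\}$, which violates AR because of the identity $k+(3\mu+k)=(\mu+k)+(2\mu+k)$. Following the $GT_{r,2}^{(4)}$ template, I would permute the pendant labels of the fourth copy: cyclically rotate the four labels corresponding to first-copy pendant labels $\{1,2,3,4\}$, and for each remaining odd $k\geq 5$ swap $3\mu+k$ with $3\mu+k+1$. Every quasi-leaf label set then takes the form $\{k,\mu+k,2\mu+k,3\mu+k'\}$ with $k'\neq k$, which I would certify AR using Observation \ref{3ap+new} applied to the AP $\{k,\mu+k,2\mu+k\}$, verifying that $3\mu+k'$ avoids the short exceptional list. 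The four quasi-leaves corresponding to $k\in\{1,2,3,4\}$ can then be checked individually by Lemmas \ref{ap}, \ref{newodd}, \ref{bsum} and Observation \ref{3+even}, exactly as in the binary analogue.

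\textbf{Main obstacle.} The delicate bookkeeping occurs at the supporting vertices of the fourth copy: after the permutation they see two non-consecutive pendant labels, and each must be checked AR via Lemma \ref{3lab} (or Observations \ref{3+even}/\ref{3+odd}) in much the same case analysis as the $GT_{r,2}^{(4)}$ proof. A small-$r$ exception (presumably $r=2$, paralleling the $r=2$ patch of Theorem 5) will likely require one extra ad-hoc swap between a fourth-copy label and its shifted counterpart in another copy. All remaining fourth-copy interior vertices, being untouched shifts of AR sets in the first copy, remain AR by Lemma \ref{kshift}.
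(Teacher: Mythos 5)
There is a genuine gap, and it originates in a detail of the base labeling that you carried over incorrectly from the binary case. In the labeling of $T_{r,3}$ from the preceding theorem, the levels are labelled downward with consecutive numbers, so the \emph{pendant} edges receive the largest labels, including the maximum label $\mu$ itself (in the binary case the pendant edges were labelled $1$ to $2^r$, which is why that template does not transfer). Consequently, in your $n=3$ step the quasi-leaf whose first-copy edge carries the label $\mu$ receives the set $\{\mu,2\mu,3\mu\}$, and $\mu+2\mu=3\mu$, so it is not an AR-vertex; your assertion that every quasi-leaf has least element $k<\mu$ is false, and Lemma \ref{ap} is exactly inapplicable at this vertex (least element equal to the common difference). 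The paper repairs this by interchanging the last two labels of the third copy, so the two extreme quasi-leaves get $\{\alpha,2\alpha,3\alpha-1\}$ and $\{\alpha-1,2\alpha-1,3\alpha\}$, which Lemma \ref{3lab} certifies; your proof omits any such fix.

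The $n=4$ case inherits and amplifies the problem. Since you only permute fourth-copy labels, the quasi-leaf with $k=\mu$ still contains the bad triple $\{\mu,2\mu,3\mu\}$ no matter which fourth label it gets. Your appeal to Observation \ref{3ap+new} is also not legitimate: its hypothesis $a_1>d$ here reads $k>\mu$, which never holds; and a direct check against the exceptional set of Lemma \ref{max10} shows your $\pm1$ swap can collide anyway, e.g.\ for $k=\mu-1$ with $\mu$ odd one gets $k+(2\mu+k)=3\mu+(k-1)$. Moreover ``the four labels corresponding to first-copy pendant labels $\{1,2,3,4\}$'' do not correspond to quasi-leaf edges at all in the ternary labeling, so the proposed rotation acts on the wrong edges. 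The paper's actual route for $n=4$ is different: it relabels the quasi-leaf edges in copies $2$, $3$ and $4$ by a bespoke scheme organized per supporting vertex (label sets $\{\beta,2\beta,3\beta-1,4\beta\}$, $\{\beta-1,2\beta-2,3\beta-2,4\beta-2\}$, $\{\beta-2,2\beta-1,3\beta,4\beta-1\}$, verified via Lemma \ref{max10}), keeps the naive shift elsewhere, and then handles $r=2$ and $r=3$ (where some supporting vertices see non-consecutive labels) and the depth-one graphs $GT_{1,3}^{(n)}$ by explicit label sets and figures. Your ``main obstacle'' paragraph defers precisely this supporting-vertex and small-$r$ analysis, and the depth-one case is not addressed at all, so the proposal as written does not establish the theorem.
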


 \begin{proof}

The glued ternary trees with depth one, $GT_{1,3}^{(n)}$ needs to be labelled distinctly from the rest. 
Figure \ref{Generalized glued ternary tree} is an AR-labeling of $GT_{1,3}^{(4)}$. Removing the origin vertex on which the the edges with maximum labels are incident will result in the  AR-labeling of the $GT_{1,3}^{(3)}$ and subsequent removal of another origin vertex with the same criterion will result in the AR-labeling of $GT_{1,3}^{(2)}$.  
\begin{figure}[ht]
    \centering
    \includegraphics[scale=0.6]{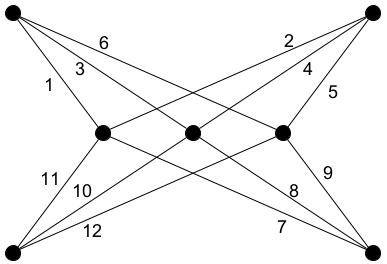}
    \caption{AR-labeling of $GT_{1,3}^{(4)}$}
    \label{Generalized glued ternary tree}
\end{figure}

While labeling all other glued ternary trees with two copies $GT(r,3)$, we can label the first copy of the ternary tree with the labeling technique used for labeling perfect ternary trees. The second copy is labelled in the following manner; Corresponding to every edge labelled $k$ in the first copy, the edge in the second copy is labelled $\alpha + k$ in the second copy where $\alpha$ denotes the number of edges in the first copy of the ternary tree. The quasi leaves will be AR-vertices due to Lemma \ref{1or2}. A restriction of Lemma \ref{kshift} to three vertices shows that the origin vertex of the second copy is an AR-vertex. A direct application of Lemma \ref{kshift} ensures that all other vertices in the second copy of the perfect ternary tree are AR-vertices. Figure \ref{Glued Ternary} demonstrates this labeling technique on $GT(2,3)$.\\ 
\begin{figure}[ht]
    \centering
    \includegraphics[scale=0.5]{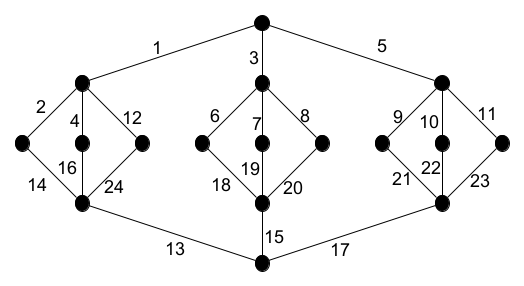}
    \caption{AR-labeling of $GT(2,3)$}
    \label{Glued Ternary}
\end{figure}

Moving onto generalized glued ternary trees with three copies $GT_{r,3}^{(3)}$, we can label the first two copies using the same labels from $GT(r,3)$. In the third copy of the ternary tree all the edges corresponding to edges labelled $k, k< \alpha - 1$ in the first copy are labelled $2\alpha + k$. The labeling of the final two edges is interchanged so that the vertices, say $v_1$ and $v_2$ on which they are incident on ends up being adjacent to sets of edge labels $\{\alpha, 2\alpha, 3\alpha - 1\}$ and $\{\alpha - 1, 2\alpha - 1, 3\alpha\}$. $v_1$ and $v_2$ are AR-vertices by Lemma \ref{3lab}. All the remaining vertices are AR-vertices by Lemma \ref{kshift} and its restriction to three vertices.

 While considering generalized glued ternary trees with four copies $GT_{r,3}^{(4)}$, the labels to edges incident on quasi leaves have to be given differently from the rest. Consider $GT_{r,3}^{(4)}$ with $r\geq4$. Let $v$ be an arbitrary supporting vertex in the first copy of the ternary tree.
Let $v_1$, $v_2$ and $v_3$ be the quasi leaves adjacent to $v$, who share the edges labelled $\beta, \beta - 1$ and $\beta - 2$ with $v$. The edges shared by these three leaves with $v', v''$ and $v'''$, the supporting vertices corresponding to $v$ in the second, third and fourth copies will be labelled in such a way that the set of edge labels incident on $v_1$ is $\{\beta, 2\beta, 3\beta - 1, 4\beta\}$, $v_2$ is $\{\beta - 1, 2\beta - 2, 3\beta - 2, 4\beta - 2\}$, $v_3$ is $\{\beta - 2, 2\beta - 1, 3\beta, 4\beta - 1\}$. Lemma \ref{max10} can be used to show that all the quasi leaves are AR-vertices under this labeling. Since the labeling hasn't changed the set of edge labels in vertices other than quasi leaves, all the remaining edges in the first three copies are labelled as in the case of $GT_{r,3}^{(3)}$ and the edges in the fourth copy in such a way that the edge corresponding to the edge labelled $k$ in the first copy is labelled $3\alpha + k$. Lemma \ref{kshift} and its restriction to three vertices ensures that remaining vertices in the fourth copy of the ternary tree are also AR-vertices. Hence the result is true for all $GT_{r,3}^{(4)}$ with $r\geq4$.

In $GT_{2,3}^{(4)}$, there exists a supporting vertex which has a non-consecutive set of edge labels incident. We proceed to label the edges in the three remaining copies in such a way that the set of edge labels incident on those three vertices will be $\{2,16,36,48\}$, $\{4,24,26,40\}$ and $\{12,14,28,38\}$. Since all other supporting vertices have consecutive numbers as labels of edges incident on them, we proceed to label the corresponding edges in the remaining copies the same way as we did when $r\geq 4$. This is an AR-labeling of $GT_{2,3}^{(4)}$.
In $GT_{3,3}^{(4)}$, there exist two supporting vertices which have non-consecutive sets of edge labels incident. we proceed to label the edges in the three remaining copies in such a way that the set of edge labels incident on those six vertices will be $\{13,52,91,131\}$, $\{14,53,92,134\}$, $\{17,56,95,130\}$, $\{15,54,93,133\}$, $\{16,55,94,135\}$ and $\{18,57,96,132\}$. Since all other supporting vertices have consecutive numbers as labels of edges incident on them, we use the same labeling technique mentioned above for other edges in the remaining copies of the perfect ternary tree. This is an AR-labeling of $GT_{3,3}^{(4)}$. So, the result is true for $GT_{r,3}^{(4)}$, for every $r \in \mathbb{N}$. Hence the Theorem.
 \end{proof}
\section{Concluding Remarks}
 Though this paper contains results about some cubic AR-graphs, whether all cubic graphs are AR-graphs is still an open question. In fact, identifying AR-graph classes seems to be an interesting venture. Can some parameterical bounds be obtained for AR-graphs will also be an interesting question. As a whole, this paper introduces the notion of AR-labeling and AR-labeling in general, and AR-graphs in particular, offers plenty of directions for research. 
\par Obviously, just like stars $K_{1,n}$ with $n>2$, there are infinitely many non AR-graphs, but the following theorem shows the existence of AR-labeling for every graph.
\begin{theorem}
    Given a graph G, there exists an AR-labeling of $G$.
\end{theorem}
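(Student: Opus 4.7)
The plan is to exploit the fact that the labeling is merely required to be into $\mathbb{N}$, not into $\{1,2,\ldots,m\}$. The key observation is that if the labels are drawn from a set $S\subseteq\mathbb{N}$ every finite subset of which has distinct subset sums (a \emph{dissociated} set), then any injective map $E(G)\to S$ is automatically an AR-labeling: at each vertex, the incident labels form a subset of $S$, and a subset of a set with distinct subset sums again has distinct subset sums.

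The canonical choice is the set of powers of two, $\{2^i : i\ge 0\}$. I would enumerate the edges of $G$ as $e_1,e_2,\ldots,e_m$ and define $f(e_j)=2^{j-1}$. Injectivity is immediate from the distinctness of the exponents. For any vertex $v$ of degree $k$ with incident edges $e_{j_1},\ldots,e_{j_k}$, the multiset of incident labels is $\{2^{j_1-1},\ldots,2^{j_k-1}\}$, a set of $k$ distinct powers of two; the uniqueness of the binary representation of a positive integer then guarantees that the $2^k$ subset sums are pairwise distinct, so $v$ is an AR-vertex. Since this holds for every $v\in V(G)$, $f$ is an AR-labeling.

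There is essentially no obstacle here; the whole content lies in noticing that the codomain is $\mathbb{N}$ rather than $\{1,2,\ldots,m\}$. Once that is observed, any dissociated set of size at least $m$ (such as $\{1,2,4,8,\ldots\}$, or indeed any super-increasing sequence $a_1<a_2<\cdots$ with $a_{i+1}>a_1+\cdots+a_i$, which also falls under Lemma \ref{bsum}) supplies a labeling. The genuinely difficult question, which motivates the rest of the paper, is when such a labeling can be realized with labels from the restricted set $\{1,2,\ldots,m\}$, i.e.\ when $G$ is an AR-graph.
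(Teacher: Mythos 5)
Your proposal is correct and coincides with the paper's own proof: both label the edges by $f(e_i)=2^{i-1}$ and invoke the uniqueness of sums of distinct powers of two (binary representation) to conclude that every vertex is an AR-vertex. The additional remark about general dissociated or super-increasing sequences is a mild generalization but does not change the argument.
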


\begin{proof}
  Let $G$ be a graph with edge set $E = \{e_1,e_2,\ldots,e_m\}$. Define $f:E \rightarrow \mathbb{N}$, $f(e_i)=2^{i-1}$. Since sums of distinct powers of 2 are always distinct, $f$ is an AR-labeling of $G$.
\end{proof}

Since every graph has an AR-labeling using sufficiently large natural numbers, the immediate optimization question would be to ask for the smallest natural number $k$ such that there exists an AR-labeling from the edge set of the graph to natural numbers from 1 to $k$. Hence, we define the notion of AR-index, which gives a rough idea of how close a graph is to being an AR-graph. 
\begin{defn}
    The minimum $k$ such that there exists an AR-labeling $f:E\rightarrow \{1,2,3,\dots,k\}$ is called the AR-index of G, denoted by $ARI(G)$.
\end{defn}
 Combining the restriction imposed by $m(G)$ and the edge labeling mentioned in Theorem 9, we have $m(G)\leq ARI (G) \leq 2^{m-1}.$ AR-graphs can be identified as graphs $G$ with $ARI(G) = m(G)$. Evaluating the AR-index of graph classes will be another possible area of research which can be extensively explored in the future.\\

\noindent \textbf{Acknowledgment:} The first author is supported by the Junior Research Fellowship (09/0239(17181)/2023-EMR-I) of CSIR (Council of Scientific and Industrial Research, India).

\end{document}